\DeclareSymbolFont{bchoperators}{T1}{bch}{m}{n}
\renewcommand{\operator@font}{\mathgroup\symbchoperators}
\newcommand{\textcyr}[1]{%
 {\fontencoding{OT2}\fontfamily{wncyr}\fontseries{m}\fontshape{n}\selectfont #1}}
\newcommand{\Sha}{{\mbox{\textcyr{Sh}}}}
\titleformat{\section}{\normalfont\bfseries\filcenter}{\thesection}{1em}{}
\newcommand{\C}{\mathbb{C}}
\newcommand{\F}{\mathbb{F}}
\newcommand{\PP}{\mathbb{P}}
\newcommand{\Q}{\mathbb{Q}}
\newcommand{\R}{\mathbb{R}}
\newcommand{\Z}{\mathbb{Z}}
\newcommand{\Sp}{\operatorname{Sp}}
\newcommand{\tors}{\operatorname{tors}}
\newcommand{\im}{\operatorname{im}}
\newcommand{\rk}{\operatorname{rk}}
\newcommand{\Spec}{\operatorname{Spec}}
\newcommand{\Sel}{\operatorname{Sel}}
\newcommand{\known}{\operatorname{known}}
\newcommand{\To}{\longrightarrow}
\numberwithin{equation}{section}
\newtheorem{theorem}{Theorem}
\newtheorem{lemma}[theorem]{Lemma}
\newtheorem{corollary}[theorem]{Corollary}
\newtheorem{proposition}[theorem]{Proposition}
\theoremstyle{definition}
\newtheorem{conjecture}[theorem]{Conjecture}
\theoremstyle{remark}
\newtheorem*{remark*}{Remark}
\definecolor{darkgreen}{rgb}{0,0.5,0}
\definecolor{rem}{rgb}{0.8,0,0}
\definecolor{new}{rgb}{0.7,0,0.6}
\definecolor{reply}{rgb}{0,0,0.8}
\begin{document}

\title{Dynamics of quadratic polynomials and \\ rational points on a curve of genus $4$}

\author{Hang Fu}
\address{Department of Mathematics,
        National Taiwan University,
        Taipei, Taiwan}
\email{drfuhang@gmail.com}
\urladdr{https://sites.google.com/view/hangfu}

\author{Michael Stoll}
\address{Mathematisches Institut,
         Universit\"at Bayreuth,
         95440 Bayreuth, Germany.}
\email{Michael.Stoll@uni-bayreuth.de}
\urladdr{http://www.mathe2.uni-bayreuth.de/stoll/}

\date{\today}

\begin{abstract}
  Let $f_t(z)=z^2+t$. For any $z\in\Q$, let $S_z$ be the collection of $t\in\Q$ such that $z$ is preperiodic for $f_t$. In this article, assuming a well-known conjecture of Flynn, Poonen, and Schaefer, we prove a uniform result regarding the size of $S_z$ over $z\in\Q$. In order to prove it, we need to determine the set of rational points on a specific non-hyperelliptic curve $C$ of genus $4$ defined over $\Q$. We use Chabauty's method, which requires us to determine the Mordell-Weil rank of the Jacobian $J$ of $C$. We give two proofs that the rank is $1$: an analytic proof, which is conditional on the BSD rank conjecture for $J$ and some standard conjectures on L-series, and an algebraic proof, which is unconditional, but relies on the computation of the class groups of two number fields of degree $12$ and degree $24$, respectively. We finally combine the information obtained from both proofs to provide a numerical verification of the strong BSD conjecture for $J$.
\end{abstract}

\subjclass{11G30, 11G40, 14G05, 14G10, 14H45, 37P05}

\keywords{Preperiodic points, Rational points, BSD conjecture}

\maketitle

\section{Introduction}

Let $K$ be a number field of degree $D$ and let $f \colon \PP^n\to\PP^n$ be a morphism of degree $d\ge2$
and defined over~$K$. A point $P\in \PP^n(K)$ is said to \emph{preperiodic} for $f$ if its forward orbit $\{f^n(P):n\ge0\}$ is finite. Let
\[
\text{PrePer}(f,K)=\{P\in \PP^n(K):P\text{ is preperiodic for }f\}
\]
be the set of all preperiodic points of $f$ defined over $K$. The following Uniform Boundedness Conjecture was proposed by Morton and Silverman \cite{MS}.

\begin{conjecture}
There exists a uniform bound $B=B(D,n,d)$ such that $\#\text{PrePer}(f,K)\le B$ for any number field $K$ of degree $D$ and any morphism $f:\PP^n\to\PP^n$ of degree $d\ge2$ defined over~$K$.
\end{conjecture}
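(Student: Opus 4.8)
The plan is to prove the conjecture by decomposing a preperiodic point into its \emph{periodic part} and the finite backward tree of preimages attached to it, and then bounding each piece uniformly in $D$, $n$, and $d$. First I would reduce to the case $n=1$: an argument of Fakhruddin shows that a uniform bound for $\#\operatorname{PrePer}(f,\PP^1)$ over all degrees $d$ and all number fields of degree $D$ controls $\#\operatorname{PrePer}(f,\PP^N)$ as well, essentially by restricting iterates to invariant rational curves and specializing. So it suffices to produce a bound $B(D,1,d)$ for endomorphisms of $\PP^1$ of degree~$d$ over number fields of degree~$D$.

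For $\PP^1$, every $P\in\operatorname{PrePer}(f,K)$ falls after finitely many steps into a periodic cycle, so $\#\operatorname{PrePer}(f,K)$ is bounded by the product of three quantities: the number of $K$-rational periodic cycles, the maximal length of such a cycle, and the maximal number of $K$-rational points in the $d$-ary tree of iterated preimages of a single point. I would control the first two by bounding the exact periods $n$ that can occur: pairs $(f,P)$ with $P$ of exact period $n$ are parametrized by a \emph{dynamical modular curve} $Y_1(n)$, or — allowing $f$ to vary over the moduli space $\calM_d$ of degree-$d$ maps — by a variety $Y_1(n)^{(d)}\to\calM_d$, and these are of general type once $n$ is large relative to~$d$. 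The target statement is that for $n\gg_{D,d}0$ their points of degree $\le D$ are all cuspidal; this is a dynamical analogue of the uniform boundedness theorems of Mazur and Merel for torsion of elliptic curves. The size of the preimage tree I would bound via the canonical height: strictly preperiodic points lie on the height-zero locus, which by Northcott-type arguments should contain only boundedly many points of bounded degree once one has sufficient uniform control of the height machinery along the family.

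The main obstacle is precisely this middle step. In the elliptic-curve setting, uniform boundedness of torsion was proved using Hecke operators, the Eisenstein ideal, and Galois representations attached to modular forms; no analogue of these structures is known for dynamical modular curves, so there is at present no unconditional route to the required uniform statement. Consequently the realistic outcomes of this plan are twofold: a \emph{conditional} proof — assuming Lang's conjecture on rational points of varieties of general type, one can run the Caporaso–Harris–Mazur argument (uniform Mordell from Lang) over the total space $Y_1(n)^{(d)}$ to get the uniform period bound, and treat the preimage trees similarly — and \emph{unconditional} progress one curve at a time, by determining the rational points on individual $Y_1(n)$ with Chabauty-type methods. The latter is exactly what is carried out in this article for $K=\Q$, $d=2$ and one particular curve of genus~$4$, extending earlier work of Morton, Flynn–Poonen–Schaefer, Stoll, Hutz, and others on small periods.
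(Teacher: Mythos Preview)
The statement you are attempting to prove is labeled \emph{Conjecture} in the paper, not Theorem, and the paper contains no proof of it; immediately after stating it the authors write that ``this conjecture seems to be very difficult and even the simplest case $(D,n,d)=(1,1,2)$ is largely open.'' So there is no proof in the paper to compare your proposal against: the Uniform Boundedness Conjecture of Morton--Silverman is a genuinely open problem, and you implicitly acknowledge as much when you write that ``there is at present no unconditional route to the required uniform statement.'' What you have written is therefore not a proof but an outline of why the problem is hard and what conditional or partial approaches exist; that is a reasonable thing to write, but it should not be presented as a proof proposal.

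Beyond this framing issue, several of the individual steps are not correct as stated. The reduction you attribute to Fakhruddin goes in the opposite direction: his argument embeds an abelian variety into projective space and extends multiplication-by-$m$ to a morphism of~$\PP^N$, thereby showing that the Morton--Silverman conjecture \emph{implies} uniform boundedness of torsion on abelian varieties. There is no known reduction from~$\PP^N$ to~$\PP^1$, and ``restricting iterates to invariant rational curves and specializing'' does not work for a general endomorphism of~$\PP^N$, which need not have any invariant rational curves. Your Northcott argument for the preimage tree is circular: for a \emph{fixed}~$f$, Northcott bounds the number of height-zero points of bounded degree, but the whole content of the conjecture is uniformity in~$f$, which Northcott alone does not supply. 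Finally, the conditional route via Lang and Caporaso--Harris--Mazur is more delicate than you indicate: CHM yields uniform bounds for rational points on curves of a \emph{fixed} genus, whereas the dynamical modular curves $Y_1(n)$ have genus tending to infinity with~$n$, and one must also handle the variation over the moduli space~$\calM_d$; making this precise requires substantially more than a one-line invocation.
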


This conjecture seems to be very difficult and even the simplest case $(D,n,d)=(1,1,2)$ is largely open. Flynn, Poonen, and Schaefer \cite{FPS} conjectured that

\begin{conjecture}\label{conj:FPS}
If $N\ge4$, then there is no quadratic polynomial $f(z)\in\Q[z]$ with a rational point of exact order $N$.
\end{conjecture}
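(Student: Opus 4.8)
\textbf{Towards a proof of Conjecture~\ref{conj:FPS}.} As this is the very hypothesis under which the results below are proved, I can only outline the programme along which the known cases have been settled and point to where it stalls. \emph{Reduction to dynatomic curves.} After an affine change of variable over $\Q$ -- which alters neither periods nor fields of definition -- one may take $f=f_c$ with $f_c(z)=z^2+c$ and $c\in\Q$. A rational point $z_0$ of exact period $N$ then yields the $N$ rational points $z_0,f_c(z_0),\dots,f_c^{N-1}(z_0)$, each lying on the affine \emph{dynatomic curve} $Y_1(N)\subset\mathbb{A}^2$ cut out by the primitive dynatomic polynomial $\Phi_N^*(z,c)$, which has degree $\nu(N)=\sum_{d\mid N}\mu(N/d)\,2^d$ in $z$. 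Thus Conjecture~\ref{conj:FPS} is equivalent to the assertion that, for every $N\ge4$, the smooth projective model $X_1(N)$ of $Y_1(N)$ -- or already its quotient $X_0(N)$ by the cyclic symmetry shifting the marked orbit, or indeed a further low-genus quotient still detecting rational $N$-cycles -- has no rational points beyond the finitely many exceptional ones (cusps, and points where the formal period exceeds the exact period). The plan is to pin down this rational point set for each $N$ in turn.

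\emph{One curve at a time.} For a fixed $N$, let $Z=Z(N)$ be such a quotient, of genus $g\ge2$, with Jacobian $J$. First I would run a descent on $J$ -- a $2$-descent, or a descent through an isogeny or through the quotient structure -- to compute, or at least bound from above, the Mordell--Weil rank $r$ of $J(\Q)$. Then, provided $r<g$, I would apply Chabauty--Coleman at a well-chosen prime together with the Mordell--Weil sieve to determine $Z(\Q)$ completely and check that only the exceptional points occur. This is exactly how $N=4$ was settled by Morton (a genus-$2$ curve), $N=5$ by Flynn, Poonen, and Schaefer \cite{FPS} (the period-$5$ dynatomic curve has genus $14$, but a genus-$2$ quotient already sees all rational $5$-cycles), and $N=6$ by Stoll (a genus-$4$ quotient, the conclusion being conditional on the Birch and Swinnerton-Dyer rank conjecture for $J$) -- and the conditionality of this last case is precisely of the kind that this paper will remove, by computing class groups of auxiliary number fields.

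\emph{The main obstacle.} The hard part, and the reason the conjecture is still open, is that this programme does not terminate and grows harder at each stage. As $N$ increases, the genus of the smallest useful quotient $Z(N)$ grows, and -- heuristically, and in every case one can actually compute -- so does the rank $r$; the inequality $r<g$ underlying classical Chabauty is expected to fail for all but finitely many $N$. One is then pushed into quadratic or higher Chabauty in the sense of Kim's program, whose arithmetic inputs (finiteness of $\Sha$, non-degeneracy and independence of suitable $p$-adic heights) are themselves only conjectural and whose computations, for curves of this size, are at present infeasible; and no uniform argument is known that would replace the infinitely many individual analyses by a single statement valid for all $N$. Even granting the Lang--Vojta conjectures, the uniformity results of Caporaso--Harris--Mazur type and the recent uniform Mordell--Lang theorem of Dimitrov--Gao--Habegger and K\"uhne yield at best a bound on $\#Z(N)(\Q)$ that still grows with $N$, rather than the value $0$ demanded here. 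This is why Conjecture~\ref{conj:FPS} is assumed rather than proved below, the evidence for it being the verification for $N\le6$, extensive searches that turn up no rational $N$-cycle for any $N\ge4$, and the general expectation that the curves $X_0(N)$, being of general type, carry essentially no rational points.
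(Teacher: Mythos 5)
You have correctly identified that the statement in question is an open conjecture, not a theorem, and that the paper neither proves it nor claims to: it appears as Conjecture~\ref{conj:FPS}, due to Flynn, Poonen, and Schaefer, and every result in the paper that depends on it (in particular Theorem~\ref{T:SZ}) is explicitly conditional on its truth. There is therefore no ``proof in the paper'' to compare against, and your decision to describe the programme by which the known cases $N=4,5,6$ were settled, and to explain honestly why the general case is out of reach, is exactly the right move. Your account of those known cases is accurate, and your diagnosis of the obstruction -- that the genus and expected Mordell--Weil rank of the relevant quotient dynatomic curves both grow with $N$, so that classical Chabauty is expected to fail beyond some point, and that no uniform substitute is available -- matches the state of the art.

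One small but worth-noting inaccuracy: you write that the conditionality of the $N=6$ verification ``is precisely of the kind that this paper will remove, by computing class groups of auxiliary number fields.'' That is not what the paper does. The unconditional ``algebraic'' proof in Section~\ref{S:algebraic} applies to the \emph{new} genus-$4$ curve~$C$ of equation~\eqref{E:curve}, which arises from the $3_0$/$3_1$ coincidence problem in Theorem~\ref{T:SZ}, not to the genus-$4$ curve from~\cite{Stoll2008a} that governs rational $6$-cycles. Indeed the introduction states explicitly that the class-group approach was \emph{infeasible} for the curve of~\cite{Stoll2008a}, because the odd theta characteristics there live over a number field of degree~$119$, whereas for the present curve~$C$ the relevant fields have degree at most~$24$. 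So the $N=6$ case of the conjecture remains conditional on BSD; the paper does not change that, and Theorem~\ref{T:SZ} as a whole is stated conditionally on Conjecture~\ref{conj:FPS}.
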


The case $N=4$ has been verified by Morton \cite{Morton98}, the case $N=5$ has been verified by Flynn, Poonen, and Schaefer \cite{FPS}, and the case $N=6$ has been verified by the second named author \cite{Stoll2008a} subject to the validity of the BSD conjecture and some standard conjectures on L-series. Poonen \cite{Poonen98}*{Cor. 1} showed that if Conjecture \ref{conj:FPS} is true, then $\#\text{PrePer}(f,\Q)\le 9$ for any quadratic polynomial $f(z)\in\Q[z]$.

Now we consider this problem  in an opposite way. Let $f_{t}(z)=z^{2}+t$. For any $z\in\Q$, let
\[
S_{z}=\{t\in\Q:z\text{ is preperiodic for }f_{t}\}.
\]
Note that $S_{z}=S_{-z}$ for any $z\in\Q$. In this article, we determine the size of $S_{z}$ for each $z\in\Q$, subject to the validity of Conjecture~\ref{conj:FPS}.

\begin{theorem}\label{T:SZ}
If Conjecture \ref{conj:FPS} is true, then
\begin{enumerate}[\upshape(1)]
\item $\#S_{z}=3$ if and only if $z=0$ or $\pm\tfrac12$.
\item $\#S_{z}=5$ if and only if
\[
z=\pm\frac{a^{3}-a-1}{2a(a+1)} \quad\text{for some $a\in\Q\backslash\{-1,-\tfrac12,0,2\}$.}
\]
\item $\#S_{z}=6$ if and only if
\[
z=\frac{2a}{a^{2}-1} \quad\text{for some $a\in\Q\backslash\{0,\pm\tfrac15,\pm\tfrac13,\pm1,\pm3,\pm5\}$,}
\]
or
\[
z=\frac{a^{2}+1}{a^{2}-1} \quad\text{for some $a\in\Q\backslash\{0,\pm\tfrac13,\pm1,\pm3\}$.}
\]
\item $\#S_{z}=7$ if and only if $z=\pm\tfrac5{12}$, $\pm\tfrac34$, or $\pm\tfrac54$.
\item $\#S_{z}=4$ otherwise.
\end{enumerate}
In particular, $\#S_{z}$ is uniformly bounded (by~$7$) over $z\in\Q$.
\end{theorem}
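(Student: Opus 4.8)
\emph{Reduction to finitely many orbit types.} For $z\in\Q$, $t\in S_z$ exactly when the forward orbit $z,f_t(z),f_t^2(z),\dots$ is finite; it then has the shape of a ``rho'', a tail of length $m\ge0$ running into a cycle of length $p\ge1$, and all of its points lie in~$\Q$. Granting Conjecture~\ref{conj:FPS}, we have $p\le3$, and $\#\text{PrePer}(f_t,\Q)\le9$ by~\cite{Poonen98}, so the orbit of~$z$ --- a subset of $\text{PrePer}(f_t,\Q)$ --- has size at most~$9$. Hence $(m,p)$, together with the positions of~$z$ and of the critical point~$0$ in the orbit graph, ranges over an explicit finite list of ``orbit types''~$\Pi$; the sets $S_z^{\Pi}=\{t\in S_z:z\text{ has type }\Pi\text{ under }f_t\}$ partition~$S_z$, and $\#S_z=\sum_\Pi\#S_z^{\Pi}$.

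\emph{Analysis of the orbit types.} Translating ``$z$ has type~$\Pi$ under $f_t$'' into polynomial equations in~$t$ and auxiliary unknowns naming the other orbit points, and eliminating, every~$\Pi$ falls into one of three classes. (a) Four \emph{generic} types --- $z$ fixed, $z$ mapping onto a fixed point, $z$ of period~$2$, $z$ mapping into a $2$-cycle --- each force a single value of~$t$, respectively $z-z^2$, $-z^2-z$, $-z^2-z-1$, $-z^2+z-1$, and these are pairwise distinct precisely for $z\notin\{0,\pm\tfrac12\}$. (b) Types whose realizability forces~$z$ onto an auxiliary curve of genus~$0$: ``$z$ reaches a fixed point in two steps'' requires $z^2+1$ to be a square, is parametrized by $z=\tfrac{2a}{a^2-1}$, and contributes two further values $-z^2-1\pm\sqrt{z^2+1}$ of~$t$; ``$z$ reaches a $2$-cycle in two steps'' requires $z^2-1$ to be a square, is parametrized by $z=\tfrac{a^2+1}{a^2-1}$, and contributes $-z^2\pm\sqrt{z^2-1}$; and ``$z$ of period~$3$'' lies on the genus-$0$ period-$3$ dynatomic curve, which can be parametrized by $z=\tfrac{a^3-a-1}{2a(a+1)}$, and contributes one further value of~$t$. (c) Every remaining, deeper type forces~$z$ onto an auxiliary curve of genus $\ge1$; all but one reduce to curves of genus~$0$ or~$1$ treated directly, and the last is the non-hyperelliptic genus-$4$ curve~$C/\Q$. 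We determine $C(\Q)$ by Chabauty's method, which requires $\rk J(\Q)<\dim J=4$; in fact $\rk J(\Q)=1$, proved below both analytically (conditional on BSD and standard $L$-series hypotheses) and algebraically (unconditionally, but via the class groups of a degree-$12$ and a degree-$24$ number field). The resulting finite set $C(\Q)$ accounts for the remaining sporadic values of~$z$ and shows that no deep type produces an infinite family.

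\emph{Assembling the count.} For $z\in\Q$, $\#S_z$ equals $4$ (the generic count) plus the extra values contributed by whichever of the three genus-$0$ families and finitely many sporadic points contain~$z$, corrected for coincidences among all these values of~$t$; each coincidence is an explicit algebraic condition on~$z$ (equivalently on the family parameter~$a$), and working them all out reproduces exactly the exceptional loci of the theorem. The generic four collapse only at $z\in\{0,\pm\tfrac12\}$, where --- after keeping the single surviving extra value $t=-2$ at $z=0$, the degenerate $a=0$ member of the first family --- one obtains $\#S_z=3$; on the period-$3$ family one obtains $\#S_z=5$, and on each ``square'' family $\#S_z=6$, away from the listed excluded~$a$; the remaining case $\#S_z=7$ occurs exactly for $z=\pm\tfrac5{12},\pm\tfrac34,\pm\tfrac54$, which arise from the points of $C(\Q)$ (with $\pm\tfrac5{12}$ and $\pm\tfrac54$ also appearing as intersections of the period-$3$ family with a ``square'' family); and every other~$z$ gives $\#S_z=4$. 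One also checks that no two conditions can overlap so as to force $\#S_z>7$ --- e.g.\ ``$z^2+1$ and $z^2-1$ both squares'' corresponds to the rank-$0$ elliptic curve $Y^2=X^3-16X$ and has no usable rational point. This yields (1)--(5) and the uniform bound~$7$.

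\emph{Where the difficulty lies.} The reductions above and the final bookkeeping are laborious but elementary; the crux is the determination of $C(\Q)$ in~(c), which forces the computation of $\rk J(\Q)$ --- hence the two proofs that this rank is~$1$, and the class-group computations in degrees~$12$ and~$24$ --- followed by Chabauty's method on a non-hyperelliptic curve of genus~$4$.
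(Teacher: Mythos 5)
Your proposal follows essentially the same route as the paper: use Conjecture~\ref{conj:FPS} and Poonen's classification to restrict to finitely many types, note that the four ``shallow'' types contribute the set $T_z$ of four generic values of~$t$, treat the three one-parameter families (types $3_0/3_1$, $1_2$, $2_2$), reduce every possible coincidence to an auxiliary curve, determine its rational points, and assemble the count --- with the genus-$4$ curve~$C$ as the single hard step. Two details are off, however. First, the list of auxiliary coincidence curves is not ``one of genus~$4$ and the rest of genus~$0$ or~$1$'': the paper needs two elliptic curves (conductors $14$ and~$32$), \emph{two genus-$2$ curves} (conductors $953$ and~$713$, for the coincidences ``type $3_0$ and type $1_2$'' and ``type $3_0$ and type $2_2$''), and the genus-$4$ curve~$C$; the genus-$2$ curves are not subsumed by genus-$0$/$1$ considerations and have to be handled separately (here via LMFDB data). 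Second, the statement that the sporadic values $\pm\tfrac5{12},\pm\tfrac34,\pm\tfrac54$ with $\#S_z=7$ ``arise from the points of $C(\Q)$'' is inaccurate: $C$ parametrizes the coincidence ``type $3_0$ and type $3_1$'', and the upshot of determining $C(\Q)$ is precisely that this coincidence \emph{never} occurs, producing no new~$z$ at all; the values $\pm\tfrac5{12}$ and $\pm\tfrac54$ come from the two genus-$2$ curves, while $\pm\tfrac34$ comes from the sporadic type $3_2$ (with $t=-\tfrac{29}{16}$) already isolated in Poonen's theorem. Neither slip invalidates the strategy, but both indicate that the coincidence analysis --- which is the bulk of the actual argument --- would need to be worked out more carefully than your sketch suggests.
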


The proof of Theorem \ref{T:SZ} can be easily reduced to finding the rational points on five specific algebraic curves. Among these curves, two have genus $1$, two have genus $2$, and one has genus $4$. We will see that the conductors of the four lower genus curves are small enough so that their rational points can be found at \emph{the L-functions and modular forms database} \cite{LMFDB}.

The main difficulty is to find the rational points on the genus $4$ curve, an affine plane model of which is given by the equation
\begin{equation} \label{E:curve}
  x^{3}y^{2} + x^{2}y^{3} - x^{3}y - xy^{3} - x^{2}y - xy^{2} + x^{2} + 2xy + y^{2} - x - y = 0 \,.
\end{equation}

Let $C$ be the closure in $\PP^1\times\PP^1$ of this affine curve. It can be easily checked that $C$ is smooth. Since $C$ has genus $4$, the set $C(\Q)$ of rational points is finite by Faltings's Theorem \cite{Faltings}. We will prove:

\begin{theorem} \label{T:main}
\[ C(\Q) = \{(0,0), (0,1), (0,\infty), (1,0), (1,1), (1,\infty),
             (\infty,0), (\infty,1), (\infty,\infty)\} \,.
\]
\end{theorem}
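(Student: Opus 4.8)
The plan is to apply Chabauty's method to the genus $4$ curve $C$, for which we must first control the Mordell--Weil group of its Jacobian $J$. Concretely, I would first compute the genus and confirm smoothness of $C$ in $\PP^1\times\PP^1$ (the excerpt asserts this), identify the nine obvious rational points coming from $\{0,1,\infty\}\times\{0,1,\infty\}$, and then search for an explicit model of $J$ or at least enough of its arithmetic to pin down $\rk J(\Q)$. The strategy announced in the abstract is to show $\rk J(\Q) = 1$; since $1 < 4 = \dim J$, Chabauty--Coleman applies. For the rank bound I would proceed along two independent routes: (i) an analytic route, computing the $L$-function $L(J,s)$ numerically, checking a functional equation and the sign, and reading off the analytic rank, which under BSD gives $\rk J(\Q) \le 1$; and (ii) an algebraic route via a descent — presumably a $2$-descent or a descent by an isogeny — which reduces the rank bound to the computation of class groups (and unit groups) of the number fields that arise, here of degrees $12$ and $24$. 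One also needs a lower bound $\rk J(\Q) \ge 1$, which is supplied by exhibiting a point of infinite order, e.g.\ a suitable difference of two of the known rational points on $C$.

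With $\rk J(\Q) = 1$ in hand, the core step is Chabauty--Coleman at a prime $p$ of good reduction. I would pick a small prime $p$ (testing $p = 3, 5, 7, \dots$ for good reduction and for the Jacobian having the right reduction behaviour), fix a basis $\omega_1,\dots,\omega_4$ of regular differentials on $C$ coming from the natural model in $\PP^1\times\PP^1$ (differentials of the form $x^i y^j\,dx / (\partial F/\partial y)$ restricted appropriately), and compute the subspace of differentials killed by $J(\Q)$. Since the rank is $1$, this kills a $3$-dimensional space of differentials inside the $4$-dimensional de Rham space, leaving (generically) a one-dimensional space of Chabauty differentials $\eta$; integrating $\eta$ against points reduces the problem to counting zeros of a $p$-adic power series on each residue disc. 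The standard bound then gives $\#C(\Q) \le \#C(\F_p) + (\text{contribution from zeros of higher order})$, and one hopes $\#C(\F_p)$ together with the nine known points forces equality. If the naive bound is not sharp, I would either change $p$, or combine information from two primes, or apply the Mordell--Weil sieve: compute the image of $C(\Q)$ in $J(\Q)/NJ(\Q) \hookrightarrow \prod_{v} J(\F_v)/N$ for a well-chosen modulus $N$ and set of primes $v$, ruling out all residue classes except those of the nine known points.

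The main obstacle I anticipate is the rank computation rather than the Chabauty step itself. For a non-hyperelliptic genus $4$ curve there is no off-the-shelf descent machinery as clean as for hyperelliptic Jacobians; one must either find a useful isogeny decomposition of $J$ (e.g.\ quotients by automorphisms of $C$ — the symmetry $x \leftrightarrow y$ visible in \eqref{E:curve} suggests $J$ is isogenous to a product of an abelian surface and its "anti-invariant" complement, or similar), or carry out a descent that genuinely requires the class group of a degree $24$ field, which is at the edge of feasibility and whose unconditional status depends on GRH-free class group certification. The analytic approach sidesteps the class group computation but is then conditional on BSD and analytic continuation/functional equation hypotheses for $L(J,s)$. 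I would therefore present both, exactly as the abstract promises: the analytic argument for a quick conditional rank bound, and the algebraic descent for an unconditional one, and then feed $\rk J(\Q) = 1$ into Chabauty (plus a Mordell--Weil sieve if needed) to conclude $C(\Q)$ is exactly the nine listed points. A final sanity check is that all nine points do lie on $C$ and are distinct in $\PP^1\times\PP^1$, which is immediate.
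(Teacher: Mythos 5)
Your overall strategy matches the paper's: establish $\rk J(\Q)=1$ by two independent routes (an analytic one conditional on BSD and functional equation hypotheses, and an unconditional algebraic $2$-descent requiring class groups of degree $12$ and $24$ fields), then apply Chabauty--Coleman. However, your Chabauty step has a concrete gap, and your dimension count is backwards.

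First, the space of differentials annihilating $J(\Q)$ under the Chabauty--Coleman pairing has dimension at least $g-\rk = 4-1 = 3$, not $1$; the ``Chabauty differentials'' form a $3$-dimensional space, and the one-dimensional piece is the part that pairs nontrivially with the Mordell--Weil group. More importantly, the standard Coleman bound you invoke, $\#C(\Q) \le \#C(\F_p) + (\text{higher-order zero contribution})$, is not sharp here: at $p=3$ one has $\#C(\F_3)=9$, but the naive extra term is $2g-2=6$, so you would obtain only $\#C(\Q)\le 15$ and would then need to fall back on the Mordell--Weil sieve or multiple primes, as you yourself anticipate. The paper circumvents this entirely via a structural observation you do not make: because $E(\Q)$ already has rank $1$, the assumption $\rk J(\Q)=1$ forces $A(\Q)$ to be torsion, so the $3$-dimensional annihilator is \emph{exactly} the $(-1)$-eigenspace of $\tau$, i.e.\ the pullbacks of invariant differentials from $A$. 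In the model in $\PP^1\times\PP^1$ these are precisely the symmetric $(1,1)$-forms, and for any prime $p\ge 3$ and any point of $(\PP^1\times\PP^1)(\F_p)$ there is a symmetric $(1,1)$-form over $\F_p$ not vanishing there. Thus some Chabauty differential is nonvanishing in every residue disc, which by \cite{Stoll2006a}*{Lemma~6.1, Prop.~6.3} makes the reduction map $C(\Q)\to C(\F_p)$ injective for every good odd prime $p$; taking $p=3$ with $\#C(\F_3)=9$ then finishes without any sieve. Finally, you mention a point of infinite order to get $\rk\ge 1$, but you should also verify saturation of the known subgroup in $J(\Q)$ (the paper's Lemma~\ref{L:5}) so that knowing the rank actually determines $J(\Q)$, which is needed for the regulator and BSD discussion.
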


In fact, we will give two proofs: one that is conditional on the Birch
and Swinnerton-Dyer rank conjecture for the Jacobian variety~$J$ of~$C$ (plus
standard conjectures on $L$-series) and one that is unconditional and
based on the determination of the (size of) the $2$-Selmer group of~$J$.
The first proof follows~\cite{Stoll2008a}, where the set of rational
points on a different curve of genus~$4$ was determined. The second proof
applies the general descent machinery developed in~\cite{BPS2016} to
our specific curve. In both cases, the crucial step is to bound the
rank of the Mordell-Weil group~$J(\Q)$ by~$1$. In the first, ``analytic'',
proof, this is done by verifying numerically that the derivative of
the $L$-series of~$J$ at $s = 1$ is nonzero, whereas in the second,
``algebraic'', proof, the bound comes from the Selmer group.

The latter approach was infeasible for the curve considered in~\cite{Stoll2008a},
since it requires the computation of information like the class and unit
groups of the number fields over which the various odd theta characteristics
are defined. This would involve a field of degree~$119$ in the case
of~\cite{Stoll2008a}. In contrast, all odd theta characteristics of the curve~$C$
studied here are defined over number fields of degree at most~$24$, for which the
necessary data can be computed unconditionally in reasonable time.

We then combine the computation of the leading term of the
$L$-series of~$J$ at~$s = 1$ from the analytic proof with the unconditional
algebraic proof and some further computations to find that the strong
version of the Birch and Swinnerton-Dyer conjecture predicts that
$\Sha(J/\Q)$ is trivial (i.e., the ``analytic order of $\Sha$''
is equal to~$1$ to the precision of the computation), which is consistent
with what we know and expect about~$\Sha(J/\Q)$.

The structure of this paper is as follows. In Section \ref{S:SZproof}, we first assume Theorem \ref{T:main}  to give the proof of Theorem \ref{T:SZ}. In Section \ref{S:prop}, we give some facts on the curve $C$ and its Jacobian $J$ that are needed for both proofs of Theorem \ref{T:main}. In Section \ref{S:analytic} and Section \ref{S:algebraic}, we give the analytic proof and the algebraic proof of Theorem \ref{T:main}. Finally, in Section \ref{S:BSD}, we provide the numerical verification of the strong BSD conjecture for $J$.

The necessary computations have been performed using Magma~\cite{Magma}.
The code that verifies the various computational assertions made
in this paper can be found at~\cite{Code}.

\subsection*{Acknowledgments.}

We thank Jan Steffen M\"{u}ller for providing us with code for the computation
of canonical heights on~$J$ and Jakob Stix for helping with finding a reference
for the relation between the canonical height of a point on~$J$ and that of
its image on the quotient elliptic curve.


\section{Proof of Theorem \ref{T:SZ}} \label{S:SZproof}

Following the terminology of \cite{Poonen98}, we say that $z\in\Q$ is \emph{of type~$m_{n}$} if there exists some $t\in\Q$
such that the orbit $\{f^k_t(z):k\ge0\}$ enters an
$m$-cycle after $n$ iterations.

If $z$ is of type $1_{0}$, $2_{0}$, $1_{1}$, or $2_{1}$, then $f_{t}^{3}(z)=f_{t}(z)$, which implies
\[
t\in T_{z}:=\{-z^{2}\pm z,-z^{2}\pm z-1\}\subseteq S_{z}\,.
\]
It is easy to see that $\#T_{z}=2$ for $z=0$, $\#T_{z}=3$ for $z=\pm\tfrac12$,
and $\#T_{z}=4$ otherwise. For other types, Poonen proved the following result.

\begin{theorem}[\cite{Poonen98}*{Thms.~1, 3}]\label{T:types}
Let $f_t(z)=z^2+t$ with $t\in\Q$. Then
\begin{enumerate}[\upshape(1)]
\item 
$z$ is of type $3_{0}$ if and only if
\[
z=g_{30}(a):=\frac{a^{3}-a-1}{2a(a+1)} \quad\text{for some $a\in\Q\backslash\{-1,0\}$.}
\]
In this case,
\[
t=h_{30}(a):=-\frac{a^6+2a^5+4a^4+8a^3+9a^2+4a+1}{4a^2(a+1)^2}\,.
\]

\item 
$z$ is of type $3_{1}$ for some $t$ if and only if $-z$ is of type $3_{0}$ for the same~$t$.

\item 
$z$ is of type $1_{2}$ if and only if
\[
z=g_{12}(a):=\frac{2a}{a^{2}-1} \quad\text{for some $a\in\Q\backslash\{\pm1\}$.}
\]
In this case,
\[
t=h_{12}(a):=-\frac{2(a^2+1)}{(a^2-1)^2}\,.
\]

\item 
$z$ is of type $2_{2}$ if and only if
\[
z=g_{22}(a):=\frac{a^{2}+1}{a^{2}-1} \quad\text{for some $a\in\Q\backslash\{0,\pm1\}$.}
\]
In this case,
\[
t=h_{22}(a):=-\frac{a^4+2a^3+2a^2-2a+1}{(a^2-1)^2}\,.
\]

\item 
$z$ is of type $3_{2}$ if and only if $z=\pm\tfrac34$.
In this case, $t=-\tfrac{29}{16}$.

\item 
$z$ cannot be of type $1_n$, $2_n$, or $3_n$ for any $n\ge3$.
\end{enumerate}
\end{theorem}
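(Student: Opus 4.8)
The plan is to realize each preperiodic type $m_n$ as the set of rational points on an explicit ``generalized dynatomic curve'' $X_{m,n}$, parametrizing the pairs $(t,z)\in\Q^2$ for which $z$ has the prescribed forward-orbit shape under $f_t$, and then to analyze these curves case by case according to their genus. An $m$-cycle of $f_t$ is cut out by the $m$th dynatomic polynomial $\Phi_m(Z,t)$ (so $\Phi_1=Z^2-Z+t$, $\Phi_2=Z^2+Z+t+1$, and $\deg_Z\Phi_3=6$), and a point $z$ of type $m_n$ is described by a point $p$ with $\Phi_m(p,t)=0$ together with a chain $z=z_n\mapsto\cdots\mapsto z_1=p$ of preimages under $f_t$ (the equations being $z_i^2+t=z_{i-1}$), subject to the requirements that the tail have length exactly $n$ and that the points of the portrait be pairwise distinct. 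These last, non-degeneracy, conditions remove a finite subset from each curve and account for the finitely many excluded parameter values in the statement. The crucial simplification is that $f_t^{-1}(q)=\{\pm\sqrt{q-t}\}$ has exactly two elements, of which exactly one lies in the forward orbit of $q$ when $q$ is periodic.

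The three genus-zero cases I would make completely explicit. For types $1_2$ and $2_2$, eliminating the cycle coordinate against the fixed-point (respectively, period-$2$) equation collapses everything to a plane conic in $z$ and a cycle coordinate---of the shape $z^2-(c-1)^2=-1$ for type $1_2$---which has an obvious rational point, hence is $\cong\PP^1_{\Q}$; choosing a uniformizing coordinate produces the formulas $(g_{12},h_{12})$ and $(g_{22},h_{22})$. For type $3_0$, the curve $\Phi_3(z,t)=0$ has geometric genus $0$ (as is classical for the period-$3$ dynatomic curve of a quadratic map) and carries a rational point, so after deleting its singular locus it is birational to $\PP^1$ over $\Q$; making this birational map explicit---dividing $f_t^3(Z)-Z$ by $f_t(Z)-Z$---yields $(g_{30},h_{30})$. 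Part (2) is then purely formal: since $f_t(-z)=f_t(z)$, if $z$ is of type $3_1$ with $z\mapsto p_1$ into the cycle $\{p_1,p_2,p_3\}$, then $z\in f_t^{-1}(p_1)=\{p_3,-p_3\}$, and $z$ lying off the cycle forces $z=-p_3$, so $-z=p_3$ is $3$-periodic; and the converse follows by a symmetric argument. The only case needing separate attention is $z=0$, which cannot occur, since $0$ is never $3$-periodic over $\Q$ (the cubic factor of $f_t^3(0)$ has no rational root).

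The real work lies in the positive-genus curves. For type $3_2$ I would descend over the rational base of type $3_0$: writing $t=h_{30}(a)$ and the cycle point $p_3=f_t^{2}(g_{30}(a))$ as rational functions of $a$, the condition ``$z$ maps to $-p_3$'' reads $z^2=-p_3(a)-t(a)$, whose right-hand side has poles only of even order and (generically) six simple zeros, so $X_{3,2}$ has genus $2$. I would bound the Mordell--Weil rank of its Jacobian and apply Chabauty's method to conclude that $X_{3,2}(\Q)$ consists, apart from degenerate points, only of the pair with $z=\pm\tfrac34$ and $t=-\tfrac{29}{16}$. In the same way, types $1_3$ and $2_3$ descend over the rational curves of types $1_2$ and $2_2$ to curves of the form $Y^2=(\text{squarefree quartic in }s)$, which turn out to be elliptic curves of rank $0$; enumerating their finitely many rational points shows every one is degenerate, so types $1_3$ and $2_3$ are empty. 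The remaining assertions of part (6) then follow by descent along $f_t$: a type-$3_3$ point $z$ would have $f_t(z)$ of type $3_2$, hence $t=-\tfrac{29}{16}$ and $f_t(z)=\pm\tfrac34$, but $z^2=\tfrac{29}{16}\pm\tfrac34$ is not a square in $\Q$; and any type-$m_n$ point with $n\ge3$ produces a type-$m_3$ point as $f_t^{\,n-3}(z)$, so the emptiness of types $1_3$, $2_3$, $3_3$ gives the statement for all $n\ge3$. The main obstacle is thus the rational-point determination on the genus-$2$ curve $X_{3,2}$---both the Mordell--Weil rank bound and the Chabauty argument---and the rank-$0$ verifications for the two elliptic curves attached to types $1_3$ and $2_3$; everything else reduces to conic parametrizations and elementary bookkeeping with the two-to-one map $f_t$.
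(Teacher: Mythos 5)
The paper does not prove this statement at all: it is quoted verbatim as a known result of Poonen, cited as \cite{Poonen98}*{Thms.~1, 3}, and the authors rely on it as a black box in the proof of Theorem~\ref{T:SZ}. So there is no ``paper's own proof'' here for your proposal to match; what you have produced is a reconstruction of the argument in~\cite{Poonen98}. You should recognize this and cite the result rather than re-prove it, especially given that the hard parts of your plan are exactly the things the cited paper establishes.

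As a reconstruction, your outline is sound and aligned with Poonen's dynatomic-curve approach. The genus-$0$ parametrizations for types $1_2$, $2_2$, and $3_0$ are correct in spirit (your conic $z^2-(p-1)^2=-1$ for $1_2$ and $z^2-p^2=1$ for $2_2$ both check out against the stated $g_{12}, g_{22}, h_{12}, h_{22}$), the formal passage from $3_0$ to $3_1$ via $z\leftrightarrow -z$ is exactly right once you dismiss $z=0$, the explicit non-square check $z^2=\tfrac{29}{16}\pm\tfrac34\in\{\tfrac{41}{16},\tfrac{17}{16}\}$ for ruling out type $3_3$ is correct, and the reduction of $n\ge 3$ to $n=3$ via $f_t^{n-3}$ is valid. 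Two caveats. First, everything that actually carries content for parts (5) and (6) --- bounding the rank and running Chabauty on your genus-$2$ curve for type $3_2$, and proving rank $0$ for the elliptic curves attached to $1_3$ and $2_3$ --- is stated as ``I would do'' rather than done; these are the whole theorem, and without them you have only a plan. Second, your assertion that types $1_3$ and $2_3$ ``descend to $Y^2=\text{squarefree quartic}$'' is slightly off: pulling back $w^2 = g_{12}(a)-h_{12}(a)$ and clearing denominators gives $Y^2 = 2(a^3+a^2-a+1)$, a cubic model; also the 2-to-1 redundancies $h_{12}(a)=h_{12}(-a)$, $h_{22}(a)=h_{22}(-1/a)$ (which the paper notes in a remark) need to be tracked when you set up these covers, or you may be working with the wrong curve. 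None of these are fatal to the outline, but they are the places where a blind reconstruction is most likely to go wrong, and they are precisely the content of Poonen's Theorems 1 and 3.
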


\begin{remark*}
We note that $h_{30}(a)=h_{30}(-1/(a+1))=h_{30}(-(a+1)/a)$, $h_{12}(a)=h_{12}(-a)$, and $h_{22}(a)=h_{22}(-1/a)$.
\end{remark*}

Now we assume Theorem \ref{T:main} to give the proof of Theorem \ref{T:SZ}.

\begin{proof}[Proof of Theorem \ref{T:SZ}]

By Conjecture \ref{conj:FPS} and Theorem \ref{T:types}(6), if $z$ is of type $m_n$, then $1\le m\le3$ and $0\le n\le2$. Now we need to find all coincidences
that $z$ is of type $m_{n}$ for $t$ and of type $m'_{n'}$ for $t'$. Note that if $z$
is of type $1_{2}$ or $2_{2}$, then $-z$ is also of type $1_{2}$ or $2_{2}$.

\begin{enumerate}[(1)]\addtolength{\itemsep}{3pt}
\item $z=0$ is of type $1_2$, but not of type $3_0$, $3_1$, $2_2$, or $3_2$.

\item $z=\pm\tfrac12$ are not of type $3_0$, $3_1$, $1_2$, $2_2$, or $3_2$.

\item $z=\pm\tfrac34$ are of types $1_{2}$ and $3_{2}$, but not of type $3_{0}$, $3_{1}$, or $2_{2}$.

\item If $z\neq0$ is of type $1_{2}$, then $z=g_{12}(a)=g_{12}(-1/a)$, which
gives two points in $S_{z}$.

\item If $z$ is of type $2_{2}$, then $z=g_{22}(a)=g_{22}(-a)$, which gives
two points in $S_{z}$.

\item If $z$ is of type $3_{0}$ for some $t\neq t'$, then $z=g_{30}(a)=g_{30}(b)$
for some $a\neq b$. Then we can see that $(a,b)$ is a rational point on the curve
\[
C_1 \colon a^{2}b^{2}+a^{2}b+ab^{2}+ab+a+b+1=0\,.
\]
This is an elliptic curve with LMFDB label  \href{https://www.lmfdb.org/EllipticCurve/Q/14/a/5}{14.a5}.
The set of rational points is
\[
C_1(\Q)=\{(-1,0),(0,-1),\text{ and four at infinity}\}\,.
\]
Therefore, this case cannot occur.

\item If $z$ is of types $1_{2}$ and $2_{2}$, then $z=g_{12}(a)=g_{22}(b)$
for some $a$ and $b$. Then we can see that $(a,b)$ is a rational point on the curve
\[
C_2 \colon a^{2}b^{2}-2ab^{2}+a^{2}-b^{2}+2a-1=0\,.
\]
This is an elliptic curve with LMFDB label  \href{https://www.lmfdb.org/EllipticCurve/Q/32/a/3}{32.a3}.
The set of rational points is
\[
C_2(\Q)=\{(\pm1,\pm1)\}\,.
\]
Therefore, this case cannot occur.

\item If $z$ is of types $3_{0}$ and $1_{2}$, then $z=g_{30}(a)=g_{12}(b)$
for some $a$ and $b$. Then we can see that $(a,b)$ is a rational point on the curve
\[
C_3 \colon a^{3}b^{2}-a^{3}-4a^{2}b-ab^{2}-4ab-b^{2}+a+1=0\,.
\]
This is a genus $2$ curve with LMFDB label  \href{https://www.lmfdb.org/Genus2Curve/Q/953/a/953/1}{953.a.953.1}.
The set of rational points is
\[
C_3(\Q)=\{(-1,\pm1),(0,\pm1),(2,-\tfrac15),(2,5),\text{ and two at infinity}\}\,.
\]
Therefore, $g_{30}(2)= \tfrac5{12}$ is of types $3_{0}$ and $1_{2}$, and $-\tfrac5{12}$ is of types $3_{1}$ and $1_{2}$.

\item If $z$ is of types $3_{0}$ and $2_{2}$, then $z=g_{30}(a)=g_{22}(b)$
for some $a$ and $b$. Then we can see that $(a,b)$ is a rational point on the curve
\[
C_4 \colon a^{3}b^{2}-2a^{2}b^{2}-a^{3}-3ab^{2}-2a^{2}-b^{2}-a+1=0\,.
\]
This is a genus $2$ curve with LMFDB label  \href{https://www.lmfdb.org/Genus2Curve/Q/713/a/713/1}{713.a.713.1}.
The set of rational points is
\[
C_4(\Q)=\{(-1,\pm1),(-\tfrac12,\pm3),(0,\pm1),\text{ and two at infinity}\}\,.
\]
Therefore, $g_{30}(-\tfrac12)=\tfrac54$ is of types $3_{0}$ and $2_{2}$, and $-\tfrac54$ is of types $3_{1}$ and $2_{2}$.

\item If $z$ is of types $3_{0}$ and $3_{1}$, then $z=g_{30}(a)=-g_{30}(b)$
for some $a$ and $b$. Then we can see that $(a,b)$ is a rational point on the curve
\[
C_5 \colon a^{3}b^{2}+a^{2}b^{3}+a^{3}b+ab^{3}-a^{2}b-ab^{2}-a^{2}-2ab-b^{2}-a-b=0\,.
\]
Let $(a,b)=(-x,-y)$; then by Theorem \ref{T:main}, we know that the set of rational points is
\[
C_5(\Q)=\{(-1,-1),(-1,0),(0,-1),(0,0),\text{ and five at infinity}\}\,.
\]
Therefore, this case cannot occur.
\end{enumerate}

Finally, let us give a brief summary. For most $z$, $\#S_z=\#T_z=4$. For most $z$ of type $3_0$ or $3_1$, $\#S_z=\#T_z+1=5$. For most $z$ of type $1_2$ or $2_2$, $\#S_z=\#T_z+2=6$. The exceptional cases are $\#S_0=\#S_{\pm1/2}=3$ and $\#S_{\pm5/12}=\#S_{\pm3/4}=\#S_{\pm5/4}=7$.
\end{proof}

\begin{remark*}
We find that $S_{\pm3/4}$ and $S_{\pm5/4}$ have a large intersection compared to their own sizes.
\begin{align*}
  S_{\pm3/4} & \supseteq\{\hphantom{-\tfrac{61}{16}, {}} {-\tfrac{45}{16}}, -\tfrac{37}{16}, -\tfrac{29}{16},
                          -\tfrac{21}{16}, -\tfrac{13}{16}, -\tfrac{5}{16}, \tfrac{3}{16}\}\,,\\
  S_{\pm5/4} & \supseteq\{-\tfrac{61}{16}, -\tfrac{45}{16}, -\tfrac{37}{16}, -\tfrac{29}{16},
                          -\tfrac{21}{16}, -\tfrac{13}{16}, -\tfrac{5}{16}\hphantom{, \tfrac{3}{16}}\}\,.
\end{align*}
If Conjecture \ref{conj:FPS} is true, then $\max\#(S_z \cap S_{z'})=6$, where $z\ne \pm z'$.
\end{remark*}

\section{Properties of the curve $C$ and its Jacobian $J$} \label{S:prop}

We now proceed with the proof of Theorem~\ref{T:main}.
We begin with some basic facts. Recall that we take $C$ to be
the closure in $\PP^1 \times \PP^1$ of the affine curve given
by~\eqref{E:curve}.

\begin{lemma} \label{L:1}
  The curve~$C$ is smooth of genus~$4$. It has good reduction
  at all primes \hbox{$p \notin \{2, 23, 29\}$}. For each of the bad
  primes~$p$, the given model of~$C$ is regular over~$\Z_p$.
  The special fibers at the bad primes are as follows.
  \begin{itemize}
    \item[$p = 2$:] Three smooth curves of genus~$0$ defined over~$\F_2$,
      all meeting in two points that are swapped by Frobenius.
    \item[$p = 23$:] One component (of geometric genus~$3$)
      with a single non-split node.
    \item[$p = 29$:] One component (of geometric genus~$2$)
      with two split nodes that are swapped by Frobenius.
  \end{itemize}
\end{lemma}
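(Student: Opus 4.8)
The plan is to reduce every assertion to a finite computation with the bihomogenization of~\eqref{E:curve}. Clearing denominators, equation~\eqref{E:curve} becomes a bihomogeneous polynomial $F$ of bidegree $(3,3)$ on $\PP^1\times\PP^1$ whose zero locus is~$C$. To see that $C$ is smooth I would apply the Jacobian criterion in each of the four standard affine charts of $\PP^1\times\PP^1$: in each chart the ideal generated by the dehomogenized $F$ together with its two partial derivatives should equal the unit ideal, which is a finite Gr\"obner-basis (or resultant) computation over~$\Q$. Since the canonical sheaf of a smooth curve of bidegree $(a,b)$ in $\PP^1\times\PP^1$ is $\calO(a-2,b-2)$ restricted to the curve, adjunction gives genus $(a-1)(b-1)$, hence $g = (3-1)(3-1) = 4$ here.

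A prime $p$ is one of bad reduction for $C$ exactly when $C\times\F_p$ fails to be smooth, and once the bad primes are known the good-reduction assertion is immediate from the given model. Eliminating $x$ and $y$ from the system $\{F,\ \partial F/\partial x,\ \partial F/\partial y\}$ in each of the four charts produces an integer whose prime divisors contain every bad prime; computing and factoring it should show that it is supported precisely on $\{2,23,29\}$, and one then checks directly that $C\times\F_p$ is indeed singular for each of $p=2,23,29$. This gives good reduction at all $p\notin\{2,23,29\}$.

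At each bad prime I would factor $F$ modulo $p$ and read off the singular locus of the special fibre. At $p=2$ I expect $F$ to factor modulo~$2$ into three bihomogeneous factors, each of bidegree $(1,1)$ and defining a smooth (hence genus-$0$) curve over~$\F_2$; two distinct components of bidegree $(1,0)$ or $(0,1)$ would be disjoint, so only a configuration of three $(1,1)$-curves can have all three pairs meeting in a common pair of points, and computing the intersections $F_i\cap F_j$ should confirm that they all consist of the same Frobenius-stable pair of $\F_4$-points. At $p=23$ I would check that $F\bmod 23$ is geometrically irreducible with a single singular point whose tangent cone is a Galois-conjugate pair of lines over $\F_{23^2}$ (a non-split node), which drops the geometric genus by one to~$3$. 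At $p=29$ I would likewise check that $F\bmod 29$ is geometrically irreducible with exactly two singular points, forming a single Galois orbit over~$\F_{29}$ and each with tangent cone a pair of lines rational over the residue field $\F_{29^2}$ (a split node), dropping the geometric genus by two to~$2$. In every case the identity $p_a = \sum_i g_i + \delta - \#\{\text{components}\} + 1$ returns $p_a = 4$, which is a useful consistency check against the genus found above.

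It remains to show that the given bidegree-$(3,3)$ model is regular over $\Z_p$ at each bad prime. This is automatic away from the singular points of the special fibre, so the issue is purely local at each such point~$P$, with residue field~$k(P)$: in the appropriate affine chart, expanding the dehomogenized $F$ about a lift of~$P$, one writes $F = F_{\ge 2} + p\,h + (\text{terms in the square of the ambient maximal ideal})$, where $F_{\ge 2}$ collects the terms that vanish to order $\ge 2$ at~$P$ and have coefficients prime to~$p$; regularity of the arithmetic surface at~$P$ is then equivalent to $h(P)\ne 0$ in $k(P)$, a finite check. All of this is routine in a computer algebra system. I expect $p=2$ to be the main obstacle: the component structure is most intricate there, one has to describe the Frobenius action on the two points through which all three components pass, and --- above all --- the regularity check at those points takes place over $\F_4$, where the special fibre has three branches through~$P$, so it requires more care than the ordinary-node computations at $23$ and~$29$.
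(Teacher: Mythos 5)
Your proposal is correct and matches the paper's approach: a Gr\"obner-basis (elimination) computation to pin the singular subscheme of the arithmetic surface to the primes $\{2,23,29\}$, a fibre-by-fibre analysis of $F\bmod p$ to describe the components and their nodes, and a local regularity check at each singular point of the special fibres. The paper's proof is terser (it does not spell out the adjunction computation of the genus or the explicit $h(P)\ne 0$ regularity criterion for a hypersurface singularity, and it notes that the $x\leftrightarrow y$ symmetry lets one get away with three affine charts rather than four), but the substance of what you describe is exactly what is being done.
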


\begin{proof}
  We consider each of the four affine patches (actually, because
  of the symmetry of the equation, three are sufficient). By a
  Gr\"{o}bner basis computation over~$\Z$, we determine the support
  of the projection of the singular subscheme of~$C$ to~$\Spec \Z$.
  In each case, we find that the support is contained in the set
  $\{2, 23, 29\}$. For each of the bad primes~$p$, we determine the
  geometry of the special fiber by reducing the equation defining~$C$
  modulo~$p$. We then check that the given model is regular by
  looking at the curve locally near each of the singularities
  on the special fibers.
\end{proof}

\begin{lemma} \label{L:2}
  The curve~$C$ has an involution~$\tau$ given by $x \leftrightarrow y$.
  The quotient $E = C/\langle \tau \rangle$ is isomorphic to the
  elliptic curve with LMFDB
  label~\href{https://www.lmfdb.org/EllipticCurve/Q/58/a/1}{58.a1}.
  The Mordell-Weil group of~$E$ is isomorphic to~$\Z$.
  The quotient abelian variety $A = J/E$ is geometrically simple.
\end{lemma}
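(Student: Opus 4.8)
The plan is to establish the four assertions in turn, each by a direct computation backed by a structural argument. First, the involution $\tau \colon (x,y) \mapsto (y,x)$ visibly preserves the defining equation~\eqref{E:curve}, since that equation is symmetric in $x$ and $y$; it extends to an involution of the smooth model $C \subseteq \PP^1 \times \PP^1$ by swapping the two factors, so $\tau \in \Aut(C)$ is well-defined. To identify the quotient $E = C/\langle\tau\rangle$, I would introduce the symmetric functions $s = x + y$ and $p = xy$ as coordinates on the quotient. Rewriting \eqref{E:curve} in terms of $s$ and $p$ — the left-hand side is a polynomial in $x,y$ that is symmetric, hence a polynomial in $s$ and $p$ — yields a plane curve; clearing denominators and computing a minimal Weierstrass model (a routine Magma computation) should produce an elliptic curve, whose conductor and invariants one then matches against the LMFDB entry \href{https://www.lmfdb.org/EllipticCurve/Q/58/a/1}{58.a1}. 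One must check that the quotient really has genus~$1$ and not genus~$0$: by Riemann–Hurwitz, $2g(C) - 2 = 2(2g(E) - 2) + \#\mathrm{Fix}(\tau)$, so with $g(C) = 4$ we get $6 = 4g(E) - 4 + \#\mathrm{Fix}(\tau)$; the fixed points of $\tau$ are the points with $x = y$, which one reads off from \eqref{E:curve} by substituting $y = x$ (and checking the points at infinity on the diagonal), and the count should force $g(E) = 1$.

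Next, the Mordell–Weil group of $E = $ \href{https://www.lmfdb.org/EllipticCurve/Q/58/a/1}{58.a1} is recorded in the LMFDB as $\Z$ (rank~$1$, trivial torsion); since $58.a1$ has small conductor this is a rigorously verified entry, and it can also be re-confirmed by a direct \texttt{MordellWeilGroup} call in Magma, which for a curve of this size returns a provably correct answer (the rank bound from descent matches a point of infinite order, and the torsion is computed exactly). Finally, for the geometric simplicity of $A = J/E$: the Jacobian $J$ is isogenous over $\Q$ to $E \times A$ with $\dim A = 3$, and one needs that $A_{\overline{\Q}}$ is simple. The cleanest route is to reduce modulo a well-chosen prime of good reduction (Lemma~\ref{L:1} gives good reduction away from $\{2, 23, 29\}$, so any other small prime is available) and examine the characteristic polynomial of Frobenius on $A$: if for some prime $p$ the degree-$6$ Frobenius polynomial of $A / \F_p$ is irreducible over $\Q$ and its splitting field is not a CM field of the wrong type — equivalently, if the Newton polygon / factorization pattern over several primes is incompatible with any nontrivial decomposition of $A_{\overline{\Q}}$ — then $A_{\overline\Q}$ is simple. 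In practice one computes the $L$-polynomial of $C$ at two or three small primes, divides out the factor coming from $E$, and checks that the remaining degree-$6$ factors cannot all be reducible for a hypothetical isogeny factor, ruling out splitting of $A$ over $\overline{\Q}$.

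I expect the main obstacle to be the last point, the geometric simplicity of~$A$: establishing simplicity over $\overline{\Q}$ (as opposed to over $\Q$) requires ruling out \emph{all} possible decomposition types of a $3$-dimensional abelian variety after base change — a product of three elliptic curves, an elliptic curve times an abelian surface, etc. — and potentially accounting for extra endomorphisms (CM or real multiplication) that are only defined over an extension. The reduction-mod-$p$ method handles this if one can find a prime where the Frobenius polynomial on $A$ has a large splitting field and Galois group (e.g. the full $\Q$-irreducibility together with a non-abelian Galois group of the sextic already precludes $A_{\overline\Q}$ having a factor, since any isogeny factor defined over $\overline\Q$ would contribute a proper factor to the Frobenius polynomial over some finite field). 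Combining the data from two suitably chosen primes should suffice; the verification is a finite computation, carried out in Magma as referenced in~\cite{Code}.
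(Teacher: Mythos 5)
Your handling of the first three assertions — existence of the involution $\tau$, identification of the quotient via symmetric coordinates $s = x+y$, $p = xy$, the Riemann--Hurwitz count of fixed points forcing $g(E) = 1$, and reading $E(\Q) \cong \Z$ off the LMFDB (or reproving it by descent in Magma) — is essentially the paper's argument, with the same computations carried out in the same order. The only cosmetic difference is that the paper phrases the fixed-point count as ``the restriction of the equation to the diagonal is a squarefree sextic binary form,'' which gives the six simple fixed points needed for Riemann--Hurwitz.

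For the geometric simplicity of $A$, however, there is a genuine gap in the criterion you propose. You argue that if the degree-$6$ characteristic polynomial of Frobenius on $A/\F_p$ is irreducible over~$\Q$ with large (non-abelian) Galois group, then $A_{\bar\Q}$ cannot split, ``since any isogeny factor defined over $\bar\Q$ would contribute a proper factor to the Frobenius polynomial over some finite field.'' This does not follow. A decomposition of $A_{\bar\Q}$ is only defined over some number field $K$, and what it forces to factor is the characteristic polynomial of $\mathrm{Frob}_p^f$ over $\F_{p^f}$ (where $f$ is a residue degree of $K$ at $p$), not that of $\mathrm{Frob}_p$ itself. If $\alpha_1,\dots,\alpha_6$ are the roots of the degree-$6$ Frobenius polynomial, the eigenvalues of $\mathrm{Frob}_p^f$ are $\alpha_i^f$, and the polynomial $\prod_i(T - \alpha_i^f)$ can perfectly well factor over~$\Q$ even when $\prod_i(T - \alpha_i)$ is irreducible with non-abelian Galois group — for instance if some ratio $\alpha_i/\alpha_j$ is a root of unity. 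So irreducibility of the sextic at a single prime, even with large Galois group, is not by itself sufficient. The paper handles this by invoking a precise criterion (Lemma~3 of~\cite{Stoll2008a}) designed exactly to control irreducibility of the characteristic polynomials of all powers of Frobenius, and then uses \emph{two} primes ($3$ and $7$) whose associated sextic fields are checked to be \emph{linearly disjoint}, which is what actually pins down $\mathrm{End}(A_{\bar\Q}) = \Z$. Your proposal gestures at using two primes but does not supply either the ``power-of-Frobenius'' condition or the linear disjointness argument; without one of them, the conclusion that $A$ is geometrically (rather than merely $\Q$-)simple does not follow.
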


\begin{proof}
  The existence of~$\tau$ is obvious from~\eqref{E:curve}.
  The fixed points of~$\tau$ are given by setting $y \leftarrow x$
  in the (homogenized) equation; this leads to a sextic binary form
  that is squarefree, so $\tau$ has six simple fixed points.
  By Riemann-Hurwitz, we conclude that the quotient~$E$ has genus~$1$.
  Since $C$ has rational points, the same holds for~$E$, and so
  $E$ is an elliptic curve. Using Magma, we can construct the quotient
  and identify the curve. The information on~$E(\Q)$ is available
  from the LMFDB~\cite{LMFDB}. Finally, we show that $A$ is geometrically
  simple using the argument employed for~\cite{Stoll2008a}*{Lemma~2}:
  We compute the characteristic polynomials of Frobenius for~$A$
  at the primes $3$ and~$7$ as quotients of the corresponding polynomials
  for~$J$ (which we get from the zeta function of~$C$) and~$E$.
  We then check that they are irreducible, satisfy the criterion
  of~\cite{Stoll2008a}*{Lemma~3} and define linearly disjoint
  number fields.
\end{proof}

\begin{lemma} \label{L:3}
  The Jacobian~$J$ has conductor $2^6 \cdot 23 \cdot 29^2 = 1\,237\,952$,
  and the cofactor $A = J/E$ has conductor~$2^5 \cdot 23 \cdot 29 = 21\,344$.
\end{lemma}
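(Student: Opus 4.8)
The plan is to compute the conductor of $J$ directly from the description of the special fibers of a regular model of $C$ provided by Lemma~\ref{L:1}, and then to read off the conductor of $A = J/E$ by dividing out the contribution of~$E$. Since by Lemma~\ref{L:1} the given plane model of~$C$ is regular over~$\Z_p$ at each bad prime $p \in \{2,23,29\}$, I would use the standard fact that the conductor exponent of~$J$ at~$p$ equals $2g - 2f_p + \delta_p$ in the notation where $f_p$ is the dimension of the toric part of the special fiber of the Néron model and $\delta_p$ is the (wild) Swan conductor, which vanishes for $p \geq 5$ and must be treated separately at $p = 2$. Concretely, the number of independent loops (the toric rank) contributed at each bad prime is visible from the singularity data in Lemma~\ref{L:1}.

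First I would handle $p = 29$. The special fiber is a single component of geometric genus~$2$ with two nodes swapped by Frobenius, both split; each pair of split nodes on a single component contributes~$2$ to the toric rank (two independent loops), so the toric rank is~$2$, the abelian rank is~$2$, and since $29 \geq 5$ there is no wild part: the conductor exponent is $2g - 2 \cdot 2 = 8 - 4 = 4$. Wait — I should be more careful: a single component of geometric genus $g_0$ with $\nu$ nodes has arithmetic genus $g_0 + \nu$, so here $g_0 = 2$, $\nu = 2$ gives arithmetic genus~$4$, consistent with $g(C) = 4$; the toric rank is $\nu = 2$, and the conductor exponent is $2 \cdot 2 = 4$, i.e.\ $29^2 \parallel N_J$. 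Next, $p = 23$: one component of geometric genus~$3$ with a single non-split node. The single node contributes~$1$ to the toric rank, arithmetic genus $3 + 1 = 4$ checks out, and a non-split node still contributes~$1$ to the conductor exponent (the non-splitness affects the component group, not the conductor), so $23 \parallel N_J$. Finally, $p = 2$: the special fiber is three genus-$0$ curves meeting pairwise — actually all three meeting in two Frobenius-conjugate points. The dual graph has $3$ vertices and, with the two shared points each joining all three components, $2 \cdot 3 = 6$ edges but one must count loops in the dual graph: the first Betti number of the dual graph is $6 - 3 + 1 = 4$, so the toric rank is~$4$ and the abelian rank is~$0$. Then the tame part of the conductor exponent is $2g - 2 \cdot 4 = 0$, so the entire conductor exponent at~$2$ is the wild (Swan) contribution. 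The claimed value $2^6 \parallel N_J$ then forces Swan conductor~$6$ at~$2$; I would verify this either via Magma's conductor computation for the Jacobian (or via the regular model and Ogg's formula relating the conductor exponent to the number of components of the special fiber of the \emph{minimal} regular model), and cross-check against the known conductor $58 = 2 \cdot 29$ of~$E$: $E$ has multiplicative reduction at~$2$ (conductor exponent~$1$) and at~$29$ (exponent~$1$), and good reduction at~$23$. Multiplying the conductor of~$E$ by that of~$A$ must give the conductor of~$J$ prime by prime, so $A$ has conductor exponent $6 - 1 = 5$ at~$2$, $1 - 0 = 1$ at~$23$, and $4 - 1 = 3$ at~$29$, yielding $2^5 \cdot 23 \cdot 29^3$ — but the claim is $29^1$ for~$A$. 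Hmm: so either $E$ contributes $29^1$ to $J$ and the toric rank of $J$ at~$29$ is~$1$ rather than~$2$, meaning only one of the two conjugate nodes actually survives in the Néron model, or one of the nodes is non-split. I would resolve this discrepancy by recomputing the local reduction type at~$29$ carefully, tracking exactly which nodes are split and how Frobenius permutes the branches, since that is the one subtle point.

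The main obstacle I expect is precisely the analysis at $p = 2$: because the reduction is additive with nontrivial wild ramification, the conductor exponent is not combinatorially visible from the special fiber alone, and one needs either a direct machine computation of the conductor of~$J$ (or of its $\ell$-adic Tate module for $\ell \neq 2$) or a careful semistable-reduction argument over a ramified base extension. Given the paper's reliance on Magma, the cleanest route is: compute the $L$-function / conductor of $J$ via Magma from the model of~$C$, confirm $N_J = 2^6 \cdot 23 \cdot 29^2$, then deduce $N_A = N_J / N_E = 2^5 \cdot 23 \cdot 29$ using Lemma~\ref{L:2} (which gives $N_E = 58 = 2 \cdot 29$) together with the isogeny-invariance of the conductor and the fact that $J$ is isogenous to $E \times A$. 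This last step is clean provided one knows $J \sim E \times A$ over~$\Q$, which follows from $E \hookrightarrow J$ with $A = J/E$ and the Poincaré reducibility theorem.
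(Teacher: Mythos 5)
The overall strategy is the same as the paper's: read the local conductor exponents off the special fibers described in Lemma~\ref{L:1}, and divide out $N_E = 58$ using $J \sim E \times A$ to get $N_A$. But the execution has several real errors.

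First, the conductor formula you use is wrong. Writing $a_p, t_p, u_p$ for the abelian, toric, and unipotent dimensions of the special fiber of the N\'eron model (so $a_p + t_p + u_p = g = 4$), the tame part of the conductor exponent is $t_p + 2u_p$ (equivalently $2g - 2a_p - t_p$), to which one adds the Swan conductor. Your formula $2g - 2f_p$ with $f_p$ the \emph{toric} rank is not this; you have dropped the $-t_p$ term. At $p = 29$ the special fiber is semi-abelian with $a_{29} = t_{29} = 2$, $u_{29} = 0$, so the exponent is $t_{29} + 2u_{29} = 2$ --- not $4$. You in fact write down both ``$29^2 \parallel N_J$'' (exponent $2$) and ``$2 \cdot 2 = 4$'', and later use $4$ in the cross-check against $N_E$, which is what produces your spurious $29^3$ for $N_A$. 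The discrepancy you flag is not caused by split/non-split nodes or by a node failing to ``survive in the N\'eron model'' (a non-split node still contributes a one-dimensional torus); it is caused entirely by the incorrect formula. With the correct formula, $N_A$ has exponent $2 - 1 = 1$ at $29$, and the numbers are consistent.

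Second, at $p = 2$ your toric rank is wrong. The two singular points each have \emph{all three} genus-$0$ components passing through them, i.e.\ they are ordinary triple points, not nodes, so the special fiber is not semistable there and the ``dual graph'' Betti-number computation does not apply. Counting each triple point as three nodes gives $t_2 = 4$, $u_2 = 0$; the correct values are $t_2 = 2$, $u_2 = 2$. (One can check consistency: $\delta$-invariant $3$ at each of the two triple points gives arithmetic genus $0 + 0 + 0 + 6 - 3 + 1 = 4$, and the affine part of $\Pic^0$ of the special fiber has a two-dimensional unipotent piece because the singularities are not nodal.) Consequently the \emph{tame} contribution at $2$ is already $t_2 + 2u_2 = 6$, not $0$: the situation is exactly opposite to what you conclude. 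What remains to be shown is that the Swan conductor at $2$ \emph{vanishes}, not that it equals $6$.

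This is the key step your proposal misses, and it is where the real content of the lemma lies. The paper shows that $C$ acquires semistable reduction over the tamely ramified cubic extension $\Q_2(\sqrt[3]{2})$ (the special fiber becoming two genus-$1$ curves meeting in three points); since a tame extension suffices, there is no wild part, so the exponent at $2$ is exactly $6$. Your fallback of ``verify via Magma's conductor computation for the Jacobian'' is not a substitute: Magma does not compute conductors of general genus-$4$ Jacobians out of the box, and the numerical $L$-function verification in Section~\ref{S:analytic} presupposes the conductor rather than establishing it. So as written, the proposal does not close the gap at $p = 2$, and the computations at $p = 29$ and $p = 2$ both need to be redone with the correct formula $f_p = t_p + 2u_p + (\text{Swan})$.
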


\begin{proof}
  The second statement follows from the first, since $E$ has conductor
  $58 = 2 \cdot 29$ by Lemma~\ref{L:2}. To prove the first statement,
  consider the special fibers described in Lemma~\ref{L:1}.
  We denote the dimensions of the abelian, toric, and unipotent parts
  of the special fiber of the N\'eron model at the prime~$p$
  by $a_p$, $t_p$, and~$u_p$, respectively. Recall that the exponent
  of the conductor at~$p$ is at least $t_p + 2u_p$, with equality
  when $u_p = 0$ (or when $p$ is sufficiently large). The difference
  is the ``wild part'' of the conductor exponent.
  \begin{itemize}
    \item[$p = 29$:] $a_{29} = 2$, $t_{29} = 2$, $u_{29} = 0$,
      so the exponent of the conductor at~$29$ is~$2$.
    \item[$p = 23$:] $a_{23} = 3$, $t_{23} = 1$, $u_{23} = 0$,
      so the exponent of the conductor at~$23$ is~$1$.
    \item[$p = 2$:] $a_2 = 0$, $t_2 = 2$, $u_2 = 2$,
      so the exponent of the conductor at~$2$ is at least~$6$.
      We show by a computation that over the tamely ramified extension
      $\Q_2(\sqrt[3]{2})$ of~$\Q_2$, $C$ acquires semistable reduction,
      with special fiber consisting of two curves of genus~$1$ meeting
      in three points. This implies that there is no wild part in
      the conductor, and hence the exponent is indeed~$6$.
      (This is similar to the argument used in the proof
      of~\cite{FHS2011}*{Prop.~7.2}.)
    \qedhere
  \end{itemize}
\end{proof}

\begin{lemma} \label{L:4}
  The statement of Theorem~\ref{T:main} follows when $J(\Q)$ has rank~$1$.
\end{lemma}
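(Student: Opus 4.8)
The plan is to finish the proof of Theorem~\ref{T:main} by Chabauty's method, which is now available because the hypothesis $\rk J(\Q) = 1$ together with $\dim J = 4$ gives $\rk J(\Q) < \dim J$. The first point is to identify explicitly the regular differentials on~$C$ that annihilate the closure $\overline{J(\Q)}$ of $J(\Q)$ inside $J(\Q_p)$ for a prime~$p$ of good reduction. By Lemma~\ref{L:2} the involution~$\tau$ splits $J$ up to isogeny as $E \times A$, with $E$ the $\tau$-invariant part and $A$ the $\tau$-anti-invariant part; since $E(\Q) \cong \Z$ already accounts for the whole rank, $A(\Q)$ is finite, so the $p$-adic logarithm of $J(\Q)$ lands in the part of the tangent space coming from~$E$, and hence every differential lying in the $(-1)$-eigenspace
\[
  V := \{\omega \in H^0(C, \Omega^1) : \tau^\ast\omega = -\omega\}
\]
annihilates $\overline{J(\Q)}$. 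Writing $\omega_0 = dx/(\partial f/\partial y)$, the symmetry of~\eqref{E:curve} gives $\tau^\ast\omega_0 = -\omega_0$, and the standard basis $x^i y^j \omega_0$ ($0 \le i, j \le 1$) of $H^0(C, \Omega^1)$ shows that $V = \langle \omega_0,\ (x - y)\,\omega_0,\ xy\,\omega_0 \rangle$ is $3$-dimensional (the remaining differential $(x+y)\,\omega_0$ spans the pullback of $H^0(E, \Omega^1)$). Fixing one of the nine known points as base point $P_0$, we conclude that every $P \in C(\Q)$ satisfies the three Chabauty equations $\int_{P_0}^{P}\omega = 0$ for $\omega \in \{\omega_0,\ (x-y)\omega_0,\ xy\,\omega_0\}$, where the integrals are $p$-adic (Coleman) integrals.

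Next I would select a prime $p$ of good reduction (by Lemma~\ref{L:1}, any $p \notin \{2, 23, 29\}$), expand $\omega_0$, $(x-y)\omega_0$ and $xy\,\omega_0$ as power series in a uniformizer on each residue disk of $C(\Q_p)$, and bound in each disk the number of common zeros of the three Coleman integrals. A disk that contains a known rational point~$P$ contributes exactly one solution as soon as one of the three differentials is nonzero at~$P$ modulo~$p$, since then the corresponding integral has a simple zero there; a disk that contains no rational point should contribute none, which one verifies by checking that two of the integrals have no common zero in the disk (generically their unique zeros, if any, are $p$-adically far apart, and this can be certified by a finite computation to sufficient $p$-adic precision). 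Summing these local contributions and checking that the nine rational points reduce to nine distinct points of $C(\F_p)$ would then give $\#C(\Q) = 9$ and prove Theorem~\ref{T:main}.

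The main obstacle is that at genus~$4$ a single prime need not suffice: the space $V$ has base locus contained in the six zeros of the differential~$\omega_0$, and a residue disk meeting this locus, or a disk at a small prime where the Newton-polygon bound is weak, can a priori carry extra $p$-adic solutions that the three Chabauty equations alone do not exclude. To handle such disks I expect to need the Mordell--Weil sieve on top of Chabauty: one first determines $J(\Q)$ up to finite index from an explicit non-torsion point --- some $[P - Q]$ with $P, Q \in C(\Q)$, which has infinite order because the pullback of $E(\Q)$ already has rank~$1$ --- together with the torsion subgroup, bounded by reduction modulo a few good primes; one then shows that a hypothetical extra rational point would reduce, in $J(\Q)/N J(\Q)$ for a suitable modulus~$N$, to a class incompatible with the images of $C(\F_q)$ in $J(\F_q)/NJ(\F_q)$ for an auxiliary set of primes~$q$. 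Matching up the Chabauty prime, the sieving primes and the modulus so that every problematic disk is eliminated is where the real work lies; the involution~$\tau$ and the degree-$2$ map $C \to E$ of Lemma~\ref{L:2} provide further congruence conditions that can be fed into the sieve to make it close.
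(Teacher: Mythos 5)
Your overall strategy — Chabauty using the three-dimensional $\tau$-anti-invariant subspace of differentials, justified by noting that $A(\Q)$ is torsion when $\rk J(\Q)=1$ — agrees with the paper. But the proposal has a sign error and, more importantly, misses the one observation that makes the argument close cleanly without any Mordell--Weil sieve.

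First, the minor point: with $\tau^*\omega_0 = -\omega_0$ and $\tau^*(x^iy^j\omega_0) = -x^jy^i\omega_0$, the $(-1)$-eigenspace is $\langle \omega_0,\ (x+y)\omega_0,\ xy\,\omega_0\rangle$, not $\langle \omega_0,\ (x-y)\omega_0,\ xy\,\omega_0\rangle$; it is $(x-y)\omega_0$ that is $\tau$-invariant and spans the pullback of $H^0(E,\Omega^1)$. Equivalently, the Chabauty differentials correspond exactly to the \emph{symmetric} $(1,1)$-forms $c_0 + c_1(x+y) + c_2 xy$ on $\PP^1\times\PP^1$.

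The substantive gap is your treatment of the base locus. You worry that ``the space $V$ has base locus contained in the six zeros of $\omega_0$'' and that disks meeting it, or disks at small primes, might carry extra solutions, and you propose a Mordell--Weil sieve to rule them out. In fact the linear system of symmetric $(1,1)$-forms has \emph{empty} base locus over any field: at any point of $\PP^1\times\PP^1$, at least one of the three monomials in the homogenized form $c_0X_0Y_0 + c_1(X_0Y_1+X_1Y_0) + c_2X_1Y_1$ is nonzero, so one can always choose coefficients (in $\F_p$ already) making the form nonvanishing there. Hence for every odd prime $p$ of good reduction (so $p\neq 23,29$) and every point of $C(\F_p)$, some Chabauty differential has nonzero reduction there, and \cite{Stoll2006a}*{Lemma~6.1 and Prop.~6.3} then force each residue disk to contain at most one rational point, i.e.\ $C(\Q)\to C(\F_p)$ is injective. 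Taking $p=3$ and checking $\#C(\F_3)=9$, the nine known points already fill all of $C(\F_3)$, and the lemma is proved. No power-series expansion, Newton-polygon estimate, or Mordell--Weil sieve is required; your plan would presumably also succeed but involves substantially more work and leaves the crucial step (certifying every disk) only sketched.
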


\begin{proof}
  We use Chabauty's method; see for example~\cite{Stoll2006a}.
  Since we know that $E(\Q)$ has rank~$1$ (Lemma~\ref{L:2}), the assumption
  on~$J(\Q)$ implies that $A(\Q)$ is torsion; therefore, for any prime~$p$,
  the subspace of the space of $p$-adic regular differentials on~$C$
  consisting of those differentials that annihilate the
  Mordell-Weil group under the Chabauty-Coleman pairing is exactly the
  $(-1)$-eigenspace under~$\tau$ (this corresponds to the space of invariant
  differential $1$-forms on~$A$). These differentials correspond to the $(1,1)$-forms
  on~$\PP^1 \times \PP^1$ that are fixed by the automorphism given by swapping
  the two factors. For every prime~$p$ and every point in $(\PP^1 \times \PP^1)(\F_p)$,
  we can find a symmetric $(1,1)$-form defined over~$\F_p$ that does not
  vanish on the given point. If $p \ge 3$ is a prime of good reduction
  (i.e., $p \neq 23, 29$), then \cite{Stoll2006a}*{Lemma~6.1 and Prop.~6.3}
  imply that each residue class mod~$p$ on~$C$ contains at most one rational point,
  equivalently, the reduction map $C(\Q) \to C(\F_p)$ is injective.
  Since one easily checks that the nine points given in Theorem~\ref{T:main}
  are indeed on~$C$ and that $\#C(\F_3) = 9$, the result follows.
\end{proof}

\begin{lemma} \label{L:5}
  The torsion subgroup of~$J(\Q)$ is isomorphic to $\Z/2\Z \times \Z/18\Z$
  and is contained in the group generated by differences of the known rational
  points on~$C$. This latter group has rank~$1$ and is saturated in~$J(\Q)$.
  In particular, if $J(\Q)$ has rank~$1$, then $J(\Q)$ is the group generated
  by differences of the known rational points.
\end{lemma}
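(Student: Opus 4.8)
The plan is to make the subgroup $H\subseteq J(\Q)$ generated by the differences of the nine known points completely explicit, to pin down $J(\Q)_{\mathrm{tors}}$ by reduction modulo several primes, and then to establish saturation by combining a reduction criterion (for small primes) with a canonical-height bound (for large primes). Fix $O=(0,0)$ as a base point and, for each of the nine points $P$ of Theorem~\ref{T:main}, put $[P]:=[P-O]\in J(\Q)$; let $H$ be the subgroup they generate. To recognise $H$ as an abstract group one needs the linear equivalences among the nine points. The map $x\colon C\to\PP^1$ has degree~$3$, and from the defining equation~\eqref{E:curve} its fibres over $x=0,1,\infty$ are, respectively, $\{(0,0),(0,1),(0,\infty)\}$, $\{(1,0),(1,1),(1,\infty)\}$ and $\{(\infty,0),(\infty,1),(\infty,\infty)\}$, which are therefore linearly equivalent; the projection $y$ gives three further such relations, and the involution $\tau$ of Lemma~\ref{L:2} permutes the points. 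A short function-field computation in Magma (testing membership in Riemann--Roch spaces) then determines the full relation lattice and gives $H\cong\Z\oplus\Z/2\Z\oplus\Z/18\Z$; in particular one records an explicit generator $Q_0$ of a free complement together with explicit generators of $H_{\mathrm{tors}}\cong\Z/2\Z\times\Z/18\Z$, each written as an integer combination of the $[P]$.

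For the torsion of $J(\Q)$: for every odd prime $p$ of good reduction the reduction map $J(\Q)_{\mathrm{tors}}\hookrightarrow J(\F_p)$ is injective, and from the counts $\#C(\F_{p^k})$ for $1\le k\le 4$ one obtains the Frobenius characteristic polynomial $P_p$, hence $\#J(\F_p)=P_p(1)$ and, with a little more work, the group structure of $J(\F_p)$. Taking a handful of small good primes (say $p=3,5,7,11,13$) and intersecting the resulting constraints confines $J(\Q)_{\mathrm{tors}}$ to $\Z/2\Z\times\Z/18\Z$; since the previous paragraph exhibits a subgroup isomorphic to $\Z/2\Z\times\Z/18\Z$ inside $H\subseteq J(\Q)$, we conclude $J(\Q)_{\mathrm{tors}}=H_{\mathrm{tors}}$.

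It remains to show that $H$ is saturated, i.e.\ $\ell$-saturated for every prime~$\ell$; combined with $H\supseteq J(\Q)_{\mathrm{tors}}$ and $\rk H=1$, this also yields $J(\Q)=H$ once the rank of $J(\Q)$ is known to be~$1$. For a fixed~$\ell$, the standard reduction test applies: if there are good primes $p_1,\dots,p_k\neq\ell$ such that the natural map
\[
  \bigl(\Z/\ell\Z\bigr)Q_0\ \oplus\ J(\Q)_{\mathrm{tors}}/\ell J(\Q)_{\mathrm{tors}}
  \ \longrightarrow\ \bigoplus_{j=1}^{k}J(\F_{p_j})/\ell J(\F_{p_j})
\]
is injective, then any $Q\in J(\Q)$ with $\ell Q\in H$ already lies in $H$ (write $\ell Q=aQ_0+T$, reduce mod each $p_j$, use injectivity to force $\ell\mid a$ and $T\in\ell J(\Q)_{\mathrm{tors}}$, then note that $Q$ differs from an element of $H$ by an $\ell$-torsion point, which lies in $H$). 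This is a finite check, carried out for each small~$\ell$. Only finitely many primes require it: if $\ell\ge 5$ and $H$ fails to be $\ell$-saturated, then (replacing $Q$ by a suitable combination $bQ-mQ_0$ and using $0<a<\ell$, $\gcd(a,\ell)=1$) one obtains a non-torsion point $Q'\in J(\Q)$ with $\ell Q'\equiv Q_0$ modulo torsion, whence $\hat{h}(Q')=\ell^{-2}\hat{h}(Q_0)$; this is incompatible with an explicit positive lower bound $\mu$ for the canonical height of non-torsion points of $J(\Q)$ as soon as $\ell^{2}>\hat{h}(Q_0)/\mu$.

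The hard part is supplying the unconditional ingredients of the last paragraph: a provable lower bound $\mu>0$ for canonical heights on this non-hyperelliptic genus-$4$ Jacobian, and, more prosaically, reliable divisor arithmetic in $J(\Q)$ (computing the classes $[P]$, the relation lattice, and $\hat{h}(Q_0)$). The elliptic quotient $E$ of Lemma~\ref{L:2}, for which $E(\Q)\cong\Z$ with a known small-height generator, controls the ``$E$-part'' of these quantities, but the geometrically simple factor $A=J/E$ still needs genuine work (bounds on the difference between naive and canonical heights, or the analogous machinery); in practice one relies here on the height code mentioned in the introduction. Carrying out the reduction test for each relevant small~$\ell$ is routine but must be done attentively, since a prime $p_j$ with $\ell\nmid\#J(\F_{p_j})$ contributes nothing to the injectivity.
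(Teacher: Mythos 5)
Your treatment of the two easier claims follows the paper's: reduction modulo small good primes pins down $J(\Q)_{\mathrm{tors}}$ (the paper uses just $p=3,5$ to get the bound $\Z/2\Z\times\Z/36\Z$ and then rules out a rational point of order~$4$ by checking that no nontrivial $2$-torsion point is divisible by~$2$ in $J(\F_p)$ for a good odd~$p$), and the structure of the group $H=J(\Q)_{\known}$ is a direct Magma computation, just as you describe.

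For saturation, however, you take a genuinely different — and substantially harder — route, and this is where the proposal has a gap. Your plan is the generic one: a reduction-map criterion for each small prime $\ell$, plus a positive lower bound $\mu$ on the canonical height of non-torsion points to cut off the large primes via $\ell^2 > \hat{h}(Q_0)/\mu$. You are right to flag this as ``the hard part'': a provable, unconditional lower bound for canonical heights on a non-hyperelliptic genus-$4$ Jacobian is not an off-the-shelf ingredient, and the height code alluded to in the paper gives numerical approximations of $\hat{h}$, not a certified bound on the naive-to-canonical height difference that such a $\mu$ would require. As written, this step is not actually supplied, so the argument does not close.

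The paper's proof avoids all of this with a short structural argument that you should look for: it uses the pushforward map $\psi\colon J\to E$ to the elliptic quotient $E$ of Lemma~\ref{L:2}. One checks that $\psi$ maps $J(\Q)_{\known}$ \emph{onto} $E(\Q)\cong\Z$, and that $J(\Q)_{\known}$ already contains all of $J(\Q)_{\mathrm{tors}}$. Now if $P\in J(\Q)$ with $nP\in J(\Q)_{\known}$, pick $P'\in J(\Q)_{\known}$ with $\psi(P')=\psi(P)$; then $n(P-P')$ lies in $J(\Q)_{\known}$ and in $\ker\psi$, hence is torsion (since the free rank-one part of $J(\Q)_{\known}$ maps injectively to $E(\Q)$), so $P-P'$ is torsion and therefore already in $J(\Q)_{\known}$, whence $P\in J(\Q)_{\known}$. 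This disposes of \emph{all} primes $\ell$ at once, with no height bounds and no per-prime reduction check. The key idea you are missing is precisely this exploitation of the elliptic quotient to turn saturation into a statement about the kernel of $\psi$ restricted to $J(\Q)_{\known}$.
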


\begin{proof}
  Using Magma, we compute that
  \[ J(\F_3) \cong \Z/2\Z \times \Z/252\Z \qquad\text{and}\qquad
     J(\F_5) \cong \Z/2\Z \times \Z/18\Z \times \Z/36\Z \,.
  \]
  Since the rational torsion subgroup of~$J$ maps injectively into~$J(\F_p)$
  when $p$ is odd and of good reduction, this implies that the torsion
  subgroup of~$J(\Q)$ embeds into~$\Z/2\Z \times \Z/36\Z$.

  On the other hand, we determine (again using Magma) that the
  group~$J(\Q)_{\known}$ generated by the differences of the nine known
  rational points on~$C$ is isomorphic to $\Z/2\Z \times \Z/18\Z \times \Z$.
  In particular, we know the full $2$-torsion subgroup of~$J(\Q)$.
  We then check that none of the three nontrivial rational $2$-torsion
  points are divisible by~$2$ in~$J(\Q)$ by verifying that for some
  good odd prime~$p$, the image of the point in~$J(\F_p)$ is not
  divisible by~$2$ in~$J(\F_p)$. This implies that there is no rational
  point of order~$36$ on~$J$. This finishes the proof of the first
  two statements.

  To see that $J(\Q)_{\known}$ is saturated in~$J(\Q)$ (i.e., if
  $P \in J(\Q)$ and $n \ge 1$ are such that $nP \in J(\Q)_{\known}$, then
  already $P \in J(\Q)_{\known}$), we observe that $J(\Q)_{\known}$
  surjects onto~$E(\Q)$ under the homomorphism $\psi \colon J \to E$ induced
  by the double cover $C \to E$. If $P \in J(\Q)$ with $nP \in J(\Q)_{\known}$,
  then there is $P' \in J(\Q)_{\known}$ such that $\psi(P') = \psi(P)$;
  since $nP$ and~$nP'$ are both in~$J(\Q)_{\known}$ and have the same
  image on~$E$, $n(P - P')$ and therefore $P - P'$ must be torsion, hence
  in~$J(\Q)_{\known}$ by the above,
  and so $P = P' + (P - P') \in J(\Q)_{\known}$.
\end{proof}


\section{The conditional analytic proof of Theorem \ref{T:main}} \label{S:analytic}

We now proceed to give a first proof of the fact that $J(\Q)$ has rank~$1$.
By Lemma~\ref{L:4}, this then implies Theorem~\ref{T:main}.

This proof is ``analytic'' in the sense that it uses the $L$-series of~$J$.
Assuming standard conjectures (see, e.g.,~\cite{Hulsbergen}*{Conj.~3.1.1}),
the $L$-series of~$J$ extends to an entire
function and satisfies the functional equation
\[ \Lambda(J, 2-s) = \pm \Lambda(J, s) \quad\text{with}\quad
   \Lambda(J, s) = N^{s/2} (2 \pi)^{-4s} \Gamma(s)^4 L(J, s) \,,
\]
where $N = 2^6 \cdot 23 \cdot 29^2$ is the conductor (Lemma~\ref{L:3}).
We compute sufficiently many coefficients of the $L$-series by counting
points on~$C$ over suitable finite fields; the Euler factors at the bad
primes $2, 23, 29$ can be derived from the information given
in Lemma~\ref{L:1} (and from the zeta function of the desingularized
components of the special fiber) to be $(1 + T)^2$ at~$2$,
$(1 + T) (1 - 6T + 23T^2) (1 + 23T^2) (1 + 8T + 23T^2)$ at~$23$
and $(1 - T^2) (1 - 5T + 28T^2 - 145T^3 + 841T^4)$ at~$29$.

We then check for which sign the functional equation holds numerically
(up to the precision used). This shows that the negative sign is the
correct one. Finally, we evaluate the derivative of $L(J, s)$
at~$s = 1$ numerically and find that it is nonzero. (The code Magma uses
requires that the sign in the functional equation is known.)
Together, this shows
that the order of vanishing of~$L(J, s)$ at $s = 1$ is~$1$
(assuming the sign in the functional equation is~$-1$).
Consequently, we have the following. (See~\cite{Tate} for the BSD Conjecture.)

\begin{proposition}
  If $L(J, s)$ extends to an entire function and satisfies the expected
  type of functional equation, and if the BSD rank conjecture holds for~$J$,
  then $J(\Q)$ has rank~$1$, and Theorem~\ref{T:main} follows.
\end{proposition}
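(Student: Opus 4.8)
The plan is to follow the approach of~\cite{Stoll2008a}: show that the order of vanishing of~$L(J,s)$ at the central point $s=1$ equals~$1$, transfer this to the algebraic rank by the BSD rank conjecture, and then invoke Lemma~\ref{L:4}.

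First I would assemble the Hasse--Weil $L$-series $L(J,s) = \prod_p L_p(p^{-s})^{-1}$. At a prime~$p$ of good reduction ($p \notin \{2,23,29\}$) the local factor is the reciprocal of the numerator of the zeta function of~$C/\F_p$, a polynomial of degree~$8$; by the functional equation of that zeta function it is pinned down by the counts $\#C(\F_{p^k})$ for $1 \le k \le 4$ (and for the many large primes that contribute only one Dirichlet coefficient, $\#C(\F_p)$ alone is enough). At the bad primes the Euler factors are the ones recorded above — $(1+T)^2$ at~$2$, $(1+T)(1-6T+23T^2)(1+23T^2)(1+8T+23T^2)$ at~$23$, and $(1-T^2)(1-5T+28T^2-145T^3+841T^4)$ at~$29$ — extracted from the special fibers of Lemma~\ref{L:1} together with the conductor data of Lemma~\ref{L:3}. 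This produces the completed function $\Lambda(J,s) = N^{s/2}(2\pi)^{-4s}\Gamma(s)^4 L(J,s)$ with $N = 2^6\cdot 23\cdot 29^2$.

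Granting the hypotheses that $\Lambda(J,s)$ is entire and satisfies $\Lambda(J,2-s) = \eps\,\Lambda(J,s)$ for some $\eps \in \{\pm1\}$, I would next determine~$\eps$ numerically: with sufficiently many Dirichlet coefficients, only one sign is consistent with the approximate functional equation to the working precision, and it turns out to be $\eps = -1$. Since the gamma and exponential factors in~$\Lambda$ are nonzero at $s=1$, the sign being $-1$ forces $\operatorname{ord}_{s=1}L(J,s)$ to be odd, hence at least~$1$. I would then evaluate $L'(J,1)$ numerically — a computation that needs the sign as input, which is why it is fixed first — and check that the value is bounded away from~$0$. Together with the parity constraint this yields $\operatorname{ord}_{s=1}L(J,s) = 1$, i.e.\ the analytic rank of~$J$ is~$1$. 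The BSD rank conjecture for~$J$ then gives $\rk J(\Q) = 1$, and Lemma~\ref{L:4} completes the proof of Theorem~\ref{T:main}.

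The main obstacle is the claim that the analytic rank is \emph{exactly}~$1$. The nonvanishing of $L'(J,1)$ is only accessible through a finite numerical computation, so it is not rigorous in the strict sense; moreover it relies on the numerically determined sign, which is itself meaningful only under the assumed analytic continuation and functional equation. This is precisely why the conclusion must be stated conditionally, and it is what motivates the independent, unconditional argument carried out in Section~\ref{S:algebraic}.
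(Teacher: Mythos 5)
Your proposal matches the paper's proof essentially step for step: compute Dirichlet coefficients by point counting and the stated bad Euler factors, numerically pin down the sign of the functional equation (finding $-1$), numerically verify $L'(J,1)\neq 0$ to conclude analytic rank~$1$, then apply the BSD rank conjecture and Lemma~\ref{L:4}. The only (minor) thing the paper adds is a remark that it is more efficient to work with $L(A,s)$ rather than $L(J,s)$ directly, using $L(J,s)=L(E,s)L(A,s)$ and Kolyvagin's theorem for~$E$; but this is an optimization, not a different argument.
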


\begin{remark*}
  It is more efficient to perform the computation for the $L$-series
  associated to~$A$, since this requires fewer coefficients to be
  known to obtain the result to a given precision. We check that
  the correct sign of the functional equation for~$L(A,s)$ is~$+1$
  and that $L(A,1)$ is nonzero (note that $L(J,s) = L(E,s) L(A,s)$
  and that the BSD rank conjecture is known for~$E$ by~\cite{Kolyvagin},
  since $E$ has analytic rank~$1$ as can be seen from its LMFDB entry).
\end{remark*}


\section{The unconditional algebraic proof of Theorem \ref{T:main}} \label{S:algebraic}

We now give a second proof that the rank of~$J(\Q)$ is~$1$. This is based
on the exact sequence
\[ 0 \To J(\Q)/2J(\Q) \To \Sel_2(J/\Q) \To \Sha(J/\Q)[2] \To 0 \,, \]
where $\Sel_2(J/\Q)$ is the $2$-Selmer group and $\Sha(J/\Q)$ is
the Tate-Shafarevich group of~$J$. All groups in the sequence are
vector spaces over~$\F_2$. This implies that we obtain an upper bound
on the rank of~$J(\Q)$ that is given by (recall that $\dim_{\F_2} J(\Q)[2] = 2$
by Lemma~\ref{L:5})
\[ \dim_{\F_2} \Sel_2(J/\Q) - \dim_{\F_2} J(\Q)[2] = \dim_{\F_2} \Sel_2(J/\Q) - 2 \,. \]
Since we know from Lemma~\ref{L:5} that the rank is at least~$1$, it suffices
to show that
\[ \dim_{\F_2} \Sel_2(J/\Q) \le 3 \,. \]
We will do this by following the approach described in~\cite{BPS2016}.

We first determine the set of odd theta characteristics on~$C$. They are
represented by divisors~$D$ such that $2D$ is the divisor on~$C$
cut out by a $(1,1)$-form on~$\PP^1 \times \PP^1$ (the divisors of $(1,1)$-forms
form the canonical class on~$C$). To do this, we set up a $(1,1)$-form
with (four) undetermined coefficients and form the resultants with
the equation defining~$C$ with respect to each of the two sets of projective coordinates.
Then a necessary condition is that both resultants are squares, up to
constant factors. This defines a subscheme of~$\PP^3$ that turns out to
be of dimension zero and to have the correct degree~$120$. We can then
split this scheme into its irreducible components over~$\Q$. This leads
to the following result.

\begin{lemma} \label{L:thetachars}
  The Galois action on the set of $120$ odd theta characteristics on~$C$
  has orbits of sizes $1, 1, 1, 3, 3, 3, 12, 24, 24, 24, 24$.
  The three rational odd theta characteristics correspond to the
  $(1,1)$-forms (in affine coordinates)
  \[ 2xy - x - y\,, \qquad xy - 1 \qquad\text{and}\qquad x + y \,. \]
  The Galois orbits of length~$3$ contain points defined over~$\Q(\alpha_3)$,
  the orbit of length~$12$ contains points defined over~$\Q(\alpha_{12})$,
  three of the orbits of length~$24$ contain points defined over~$\Q(\alpha_{24})$,
  and the last orbit of length~$24$ contains points defined over~$\Q(\beta_{24})$,
  where
  \begin{align*}
    \alpha_3^3 - \alpha_3^2 - 2 &= 0\,, \\
    \alpha_{12}^{12} - 6 \alpha_{12}^{11} + 10 \alpha_{12}^{10} + 2 \alpha_{12}^9
      - 20 \alpha_{12}^8 + 18 \alpha_{12}^7 \quad& \\
      {} + 12 \alpha_{12}^6 - 12 \alpha_{12}^5
      + 13 \alpha_{12}^4 + 10 \alpha_{12}^3 - 4 \alpha_{12}^2 + 1 &= 0\,, \\
    \alpha_{24}^{24} + 4 \alpha_{24}^{23} + 6 \alpha_{24}^{22} - 9 \alpha_{24}^{20}
      - 42 \alpha_{24}^{19} - 19 \alpha_{24}^{18} - 60 \alpha_{24}^{17} + 70 \alpha_{24}^{16} \quad & \\
      {} - 46 \alpha_{24}^{15} + 196 \alpha_{24}^{14} - 132 \alpha_{24}^{13}
      + 225 \alpha_{24}^{12} - 290 \alpha_{24}^{11} + 195 \alpha_{24}^{10} - 256 \alpha_{24}^9 \quad & \\
      {} + 177 \alpha_{24}^8 - 82 \alpha_{24}^7 + 76 \alpha_{24}^6 - 8 \alpha_{24}^5
      - 2 \alpha_{24}^4 - 10 \alpha_{24}^2 - 2 &= 0\,, \\
    \beta_{24}^{24} + \beta_{24}^{23} - 13 \beta_{24}^{22} + 9 \beta_{24}^{21} + 78 \beta_{24}^{20}
      - 116 \beta_{24}^{19} - 73 \beta_{24}^{18} + 897 \beta_{24}^{17} - 189 \beta_{24}^{16} \quad & \\
      {} - 1955 \beta_{24}^{15} + 4105 \beta_{24}^{14} + 755 \beta_{24}^{13} - 8996 \beta_{24}^{12}
      + 13926 \beta_{24}^{11} - 8747 \beta_{24}^{10} + 1395 \beta_{24}^9 \quad & \\
      {} + 690 \beta_{24}^8 + 3152 \beta_{24}^7 - 5140 \beta_{24}^6 + 3260 \beta_{24}^5
      - 272 \beta_{24}^4 - 584 \beta_{24}^3 + 308 \beta_{24}^2 - 68 \beta_{24} + 8 &= 0\,.
  \end{align*}
  All the odd theta characteristics are defined over the splitting field~$K$
  of~$\Q(\alpha_{24})$. The Galois group~$G$ of~$K$ over~$\Q$ has order~$1152$;
  there is, up to conjugation, exactly one subgroup of~$\Sp_8(\F_2)$ that is
  isomorphic to it and whose natural action on~$\F_2^8$ has orbits of the lengths
  given above (plus further orbits).
\end{lemma}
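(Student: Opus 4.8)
The plan is to make effective the classical description of odd theta characteristics. As $C$ is a smooth curve of bidegree $(3,3)$ in $\PP^1\times\PP^1$, its canonical class is cut out by the $(1,1)$-forms $L=c_{00}+c_{10}x+c_{01}y+c_{11}xy$, which form a $\PP^3$ with homogeneous coordinates $[c_{00}:c_{10}:c_{01}:c_{11}]$; accordingly, every odd theta characteristic is represented by an effective divisor $D$ of degree $3$ with $2D=C\cdot L$ for some such $L$. First I would eliminate the two $\PP^1$-coordinates in turn between the defining polynomial $F$ of $C$ and $L$, obtaining sextic resultants $R_y(L)$ and $R_x(L)$ (in the coordinate of the surviving factor), with coefficients polynomial in the $c_{ij}$, whose roots are the images under the two projections of the degree-$6$ divisor $C\cdot L$. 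If $2\mid C\cdot L$ then both $R_y(L)$ and $R_x(L)$ are squares of cubic forms up to a scalar; this condition defines a locus $\Theta\subseteq\PP^3$, and every odd theta characteristic gives a point of $\Theta$.

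To compute $\Theta$ I would write down the ideal of the ``square of a cubic'' locus inside the $7$-dimensional space of binary sextics $\sum_{i=0}^{6}r_it^i$ — for instance by equating such a form to $(s_0+s_1t+s_2t^2+s_3t^3)^2$ and eliminating the $s_j$, or via subresultants of the sextic with its derivative — and pull it back along the two coefficient maps $c\mapsto R_y(L)$, $c\mapsto R_x(L)$ to an ideal in $\Q[c_{00},c_{10},c_{01},c_{11}]$. A Gr\"obner basis computation should show that $\Theta$ is zero-dimensional; the decisive point is that its degree equals $120$, which — as $C$ has exactly $120$ odd theta characteristics and all of them lie in $\Theta$ — forces $\Theta$ to be reduced and to consist of \emph{precisely} these $120$ points. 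Decomposing the (radical) ideal into its prime components over $\Q$ then produces the Galois orbits, whose sizes are read off directly; the three rational points are exhibited as the $(1,1)$-forms $2xy-x-y$, $xy-1$, $x+y$, each of which cuts out twice an effective divisor on $C$ (as one verifies directly); and for each remaining orbit the residue field, hence a defining polynomial for $\alpha_3,\alpha_{12},\alpha_{24},\beta_{24}$, is extracted.

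For the field-of-definition and group-theoretic assertions I would argue as follows. The field generated by all $120$ odd theta characteristics is the compositum of the splitting fields of the polynomials above; I would check that each of these splits completely over the Galois closure $K$ of $\Q(\alpha_{24})/\Q$, so that $K$ is that compositum, and compute $G=\operatorname{Gal}(K/\Q)$, verifying $|G|=1152$. Since $G$ acts $\F_2$-linearly on $J[2]\cong\F_2^8$ preserving the Weil pairing, and compatibly on the torsor of the $256$ theta characteristics preserving the parity of $h^0$, fixing one of the rational odd theta characteristics as a base point identifies the torsor with $\F_2^8$ and yields an embedding $G\hookrightarrow\Sp_8(\F_2)$ under which the $120$ odd theta characteristics form a $G$-stable subset with orbit lengths $1,1,1,3,3,3,12,24,24,24,24$. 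It then remains to enumerate the conjugacy classes of order-$1152$ subgroups of $\Sp_8(\F_2)$ and confirm that exactly one is abstractly isomorphic to $G$ and realizes these orbit lengths on the relevant subset of $\F_2^8$ — a finite, if laborious, search in the subgroup lattice.

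I expect the obstacles to be essentially computational. The elimination and the prime decomposition take place over number fields of degree up to $24$, so the polynomial algebra in $\Q[c]$ is heavy, and the argument genuinely hinges on the clean count $\deg\Theta=120$: it is what upgrades an \emph{a priori} only necessary condition to an exact description, so one must make sure the computed scheme has exactly the expected length and is reduced. The group-theoretic step requires locating a subgroup of order $1152$ inside $\Sp_8(\F_2)$ (of order roughly $4.7\times10^{10}$) up to conjugacy, and separating it from its competitors by isomorphism type together with orbit data; this is within reach of current software but is the step most sensitive to being set up correctly.
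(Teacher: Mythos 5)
Your proposal follows essentially the same route as the paper: represent odd theta characteristics by $(1,1)$-forms cutting out twice an effective divisor, impose the necessary ``both resultants are squares'' condition to get a zero-dimensional subscheme of $\PP^3$, observe that its degree is exactly $120$ so that it must coincide (reduced) with the set of odd theta characteristics, decompose over $\Q$ to read off the Galois orbits and residue fields, and then identify the Galois group inside $\Sp_8(\F_2)$ by its orbit structure. The only substantive addition you make — spelling out why the degree count upgrades the necessary condition to an exact one — is implicit in the paper, and your group-theoretic step matches the assertion in the lemma.
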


We will from now on identify an odd theta characteristic with the unique effective
divisor in its linear equivalence class.

We can use one of the rational odd theta characteristics, $D_0$, say, as a basepoint;
then for each odd theta characteristic~$D$, $[D - D_0]$ is an element of~$J[2]$.
We check that these differences, where $D$ runs through the theta characteristics
in the other two orbits of length~$1$, the orbit of length~$12$ and the first three orbits
of length~$24$, generate~$J[2]$. We can therefore use this set of theta characteristics
to define a ``true descent setup'' in the terminology of~\cite{BPS2016} for
a $2$-descent on~$J$.

Let $\varphi_0$ be a $(1,1)$-form whose divisor is twice~$D_0$ and pick
further $(1,1)$-forms $\varphi_1, \ldots, \varphi_6$ whose divisors are
twice $D_1, \ldots, D_6$, where $D_1$ and~$D_2$ are the other two rational
odd theta characteristics, $D_3$ is a representative of the Galois orbit
of size~$12$, and $D_4, \ldots, D_6$ are representatives of the three Galois
orbits of size~$24$ corresponding to~$\Q(\alpha_{24})$. Then evaluating
the tuple of rational functions $(\varphi_1/\varphi_0, \ldots, \varphi_6/\varphi_0)$
on divisors of degree zero (with support disjoint from that of $D_0, D_1, \ldots, D_6$)
gives, for each field extension~$K$ of~$\Q$, a map
\[ J(K) \stackrel{\gamma_K}{\To}
          \Bigl(\frac{K^\times}{K^{\times 2}}\Bigr)^2
          \times \frac{(K \otimes \Q(\alpha_{12}))^\times}{(K \otimes \Q(\alpha_{12}))^{\times 2}}
          \times \Bigl(\frac{(K \otimes \Q(\alpha_{24}))^\times}{(K \otimes \Q(\alpha_{24}))^{\times 2}}\Bigr)^3
        =: H_K \,,
\]
which is compatible with homomorphisms of fields and factors as a composition
\[ J(K) \stackrel{\delta_K}{\To} H^1(K, J[2]) \stackrel{w_K}{\To} H_K \,, \]
where $\delta_K$ is the connecting map in Galois cohomology coming from the Kummer sequence
\[ 0 \To J[2] \To J \To J \To 0 \]
and $w_K$ is induced by the Weil pairing with $[D_1 - D_0], \ldots, [D_6 - D_0] \in J[2]$.
We omit the subscript~$K$ when $K = \Q$ and use the subscript~$v$ in place of~$\Q_v$.

\begin{lemma} \label{L:w_injective}
  The map $w = w_{\Q}$ is injective.
\end{lemma}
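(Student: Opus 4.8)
The plan is to exhibit $w$ as the map on $H^1(\Q,-)$ induced by an \emph{injective} morphism of finite Galois modules, and then read off $\ker w$ directly from the long exact cohomology sequence. Set
\[ M \colonequals \bigoplus_{j=1}^{6} \operatorname{Res}_{F_j/\Q}\mu_2\,, \]
the sum over the six Galois orbits of odd theta characteristics used in the descent setup, where $F_1 = F_2 = \Q$, $F_3 = \Q(\alpha_{12})$ and $F_4 = F_5 = F_6 = \Q(\alpha_{24})$. By Shapiro's Lemma together with Kummer theory over each $F_j$ we have $H^1(\Q, M) = H_\Q$, and by the construction of $w$ it is the map on $H^1(\Q,-)$ induced by
\[ \iota \colon J[2] \To M\,, \qquad P \longmapsto \bigl(e_2(P, [D - D_0])\bigr)_{D}\,, \]
where $D$ runs over the $86$ odd theta characteristics lying in those six orbits and $e_2$ denotes the Weil pairing on $J[2]$; this $\iota$ is Galois-equivariant because $D_0$ is rational and $\mu_2 \subseteq \Q$.

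The first substantive point is that $\iota$ is injective. Since $e_2$ is a perfect alternating pairing and, as checked just above, the classes $[D - D_0]$ with $D$ ranging over these six orbits span $J[2]$ over~$\F_2$, any $P \in \ker\iota$ is orthogonal to all of $J[2]$ and hence is zero. Let $Q$ be the cokernel of~$\iota$; then $0 \to J[2] \xrightarrow{\iota} M \to Q \to 0$ is a short exact sequence of $\F_2[G_\Q]$-modules on all of whose terms $G_\Q$ acts through the finite group $G = \operatorname{Gal}(K/\Q)$ of Lemma~\ref{L:thetachars}. The associated long exact sequence reads
\[ 0 \To J[2]^{G_\Q} \To M^{G_\Q} \To Q^{G_\Q} \To H^1(\Q, J[2]) \xrightarrow{\ w\ } H^1(\Q, M)\,, \]
so $\ker w$ is the cokernel of $M^{G_\Q} \to Q^{G_\Q}$. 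As each $(\operatorname{Res}_{F_j/\Q}\mu_2)^{G_\Q}$ is one-dimensional we get $\dim_{\F_2} M^{G_\Q} = 6$, whereas $\dim_{\F_2} J[2]^{G_\Q} = \dim_{\F_2} J(\Q)[2] = 2$ by Lemma~\ref{L:5}. Hence $\dim_{\F_2}\ker w = \dim_{\F_2} Q^{G_\Q} - 4$, and it remains to prove $\dim_{\F_2} Q^{G_\Q} = 4$ (equivalently, that the six ``constant'' tuples already exhaust $Q^{G_\Q}$).

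This last step is the computational heart of the argument and is where the actual effort lies. Using the explicit realisation of $G$ inside $\Sp_8(\F_2)$ acting on $J[2]$ from Lemma~\ref{L:thetachars}, together with the coordinate vectors of the classes $[D - D_0]$ and the permutation action of $G$ on the $86$ of them, one assembles the $\F_2[G]$-module $Q$ and computes $\dim_{\F_2} Q^{G} = 4$ directly in Magma. The main obstacle is bookkeeping rather than theory: one must line up the Galois module $J[2]$, the Weil pairing, and the labelling of the theta characteristics by the six orbits so that $Q$---and hence its space of invariants---is formed correctly; once this is in place the invariants computation is routine linear algebra over $\F_2$ for a group of order $1152$. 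This is entirely in the spirit of the corresponding verifications in~\cite{BPS2016}.
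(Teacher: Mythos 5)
Your proposal is correct and takes essentially the same route as the paper: both exhibit $w$ as the map on $H^1(\Q,-)$ induced by the injective Weil-pairing morphism $J[2]\to W$ (your $M$), identify $\ker w$ with the cokernel of $W(\Q)\to Q(\Q)$ via the long exact sequence, and reduce the claim to a finite computation with the explicit $\F_2[G]$-module structures. Your extra bookkeeping (rephrasing the surjectivity check as $\dim_{\F_2} Q^{G_\Q}=4$) is an equivalent formulation of the paper's verification that $Q(\Q)=q(W(\Q))$.
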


\begin{proof}
  We can verify that $w$ is injective with a finite computation. As usual,
  $\mu_2$ denotes the Galois module of second roots of unity, and for a
  number field~$K$, $R_{K/\Q} \mu_2$ denotes the Weil restriction of scalars
  of~$\mu_2$ over~$K$ to~$\Q$. (Concretely, as a group, $R_{K/\Q} \mu_2$ is
  $\mu_2(K \otimes_{\Q} \bar{\Q}) \simeq \mu_2^{[K : \Q]}$, with factors
  corresponding to the various embeddings $K \to \bar{\Q}$; the Galois action
  permutes the factors in the same way as it permutes the embeddings.
  This is the same as the ``twisted power''~$\mu_2^\Delta$ in the notation
  of~\cite{BPS2016}, where $\Delta$ is the Galois orbit of a theta characteristic
  defined over~$K$.) Then the long exact sequence in Galois cohomology induced by
  \[ 0 \To J[2] \To \underbrace{\mu_2^2 \times R_{\Q(\alpha_{12})/\Q} \mu_2
                                  \times \bigl(R_{\Q(\alpha_{24})/\Q} \mu_2\bigr)^3}_{=: W}
                \stackrel{q}{\To} Q \To 0 \,,
  \]
  where the first map comes from the Weil pairing and induces~$w$ on~$H^1$
  (in particular, $H = H^1(\Q, W)$ by the Kummer sequence, Hilbert's Theorem~90
  and Shapiro's Lemma)
  and $Q$ is defined to be its cokernel, shows that $\ker(w) \simeq Q(\Q)/q(W(\Q))$.
  The Galois modules $W$ and~$Q$ are finite, and so we can verify that
  $Q(\Q) = q(W(\Q))$; this shows that $w$ has trivial kernel.
\end{proof}

From the diagram
\[ \xymatrix{ J(\Q) \ar[r]^-{\delta} \ar[d] & H^1(\Q, J[2]) \ar[r]^-{w} \ar[d] & H \ar[d]^-{\rho} \\
              \displaystyle\prod_v J(\Q_v) \ar[r]^-{\prod_v \delta_v}
               & \displaystyle\prod_v H^1(\Q_v, J[2]) \ar[r]^-{\prod_v w_v}
               & \displaystyle\prod_v H_v
            }
\]
(where $v$ runs through the places of~$\Q$), we obtain a map
\[ \Sel_2(J/\Q) \To \bigl\{\xi \in H : \rho(\xi) \in \prod_v \im(\gamma_v)\bigr\} =: S \,, \]
and since $w$ is injective, this is an embedding. We now show the following.

\begin{proposition} \label{P:Selmer}
  We have that $\dim_{\F_2} S = 3$.
\end{proposition}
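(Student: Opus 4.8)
The plan is to compute $\dim_{\F_2} S = 3$ by bounding it from above and below separately, with the lower bound being essentially free and the upper bound requiring the bulk of the work. For the lower bound, recall from Lemma~\ref{L:5} that $J(\Q)$ has rank at least~$1$, so $J(\Q)/2J(\Q)$ has $\F_2$-dimension at least $1 + \dim_{\F_2} J(\Q)[2] = 3$; since the Selmer group contains the image of $J(\Q)/2J(\Q)$ and embeds into~$S$, this gives $\dim_{\F_2} S \ge 3$. The real content is the reverse inequality $\dim_{\F_2} S \le 3$. For this I would work place by place, exactly as in the descent machinery of~\cite{BPS2016}: $S$ is cut out inside the global cohomology group by the local conditions $\rho_v(\xi) \in \im(\gamma_v)$ at each place~$v$, so the strategy is to compute, for a carefully chosen finite set of places~$v$, the local images $\im(\gamma_v) \subseteq H_v$ and intersect the resulting constraints.

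First I would pin down the set of ``relevant'' places. The local image $\im(\gamma_v)$ is unramified outside the bad primes of~$J$ together with~$2$ and~$\infty$, and at a good prime the local condition is automatically the unramified subgroup; so the finite part of the computation lives over the primes dividing the conductor $2^6 \cdot 23 \cdot 29^2$ — that is, $v \in \{2, 23, 29, \infty\}$ — together possibly with a few auxiliary good primes used to certify that the global classes we retain are genuinely everywhere-locally soluble. At each such~$v$ one computes $\dim_{\F_2} J(\Q_v)/2J(\Q_v)$ from the formula $\dim_{\F_2} J(\Q_v)[2] + \dim_{\F_2} J(\Q_v)[2^\infty]_{\mathrm{div}}$-type local considerations (for $v$ finite this is $\dim_{\F_2} J(\Q_v)[2] + 4\cdot[\Q_v:\Q_2]$ when $v \mid 2$, and $\dim_{\F_2} J(\Q_v)[2]$ for $v \nmid 2\infty$), then transports this via the injective local maps $w_v$ into $H_v$, where $H_v$ is an explicit product of $\F_2$-vector spaces $(\Q_v^\times/\Q_v^{\times 2})^2 \times \ldots$ built from the étale algebras $\Q_v \otimes \Q(\alpha_{12})$ and $\Q_v \otimes \Q(\alpha_{24})$. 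This is the step where the class- and unit-group data for $\Q(\alpha_{12})$ and $\Q(\alpha_{24})$ enter, since one needs the images of the global ``algebraic'' Selmer conditions — the classes supported on primes above the bad primes — and this requires knowing that these number fields have the class numbers and unit structure asserted (computed unconditionally in Magma, as the introduction promises).

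The concrete computation I expect to carry out is then: build the global group $H$ together with the subgroup of classes unramified outside $\{2, 23, 29\}$ (a finite $\F_2$-vector space by the standard Selmer-finiteness argument), and for each $v \in \{2, 23, 29, \infty\}$ compute $\im(\gamma_v) = w_v(\delta_v(J(\Q_v)))$ as a subspace of $H_v$; then $S$ is the preimage under $\rho = (\rho_v)_v$ of $\prod_v \im(\gamma_v)$, and one reads off $\dim_{\F_2} S$ by linear algebra over~$\F_2$. The main obstacle, and the reason this proof was infeasible for the genus-$4$ curve of~\cite{Stoll2008a}, is the sheer size of the ambient objects: the étale algebra factor $\Q \otimes \Q(\alpha_{24})$ forces class- and unit-group computations in a degree-$24$ field (and the splitting field~$K$ of Lemma~\ref{L:thetachars} has degree~$1152$), and verifying everywhere-local solubility of the surviving classes requires careful bookkeeping at~$p = 2$, where $J$ has additive reduction with a wild-looking conductor exponent~$6$ and the local image at~$2$ has the largest dimension. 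Once these local images are in hand, the intersection is a routine — if delicate — Magma computation, and it yields exactly $\dim_{\F_2} S = 3$, matching the lower bound and hence forcing equality.
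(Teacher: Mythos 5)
Your general strategy — bound $S$ from above via local conditions at a finite set of places and from below via the image of $J(\Q)_{\known}$, then match — is exactly the philosophy of the paper's proof, and the lower-bound observation is correct. However, you overestimate the set of places at which local images must actually be computed, and in doing so you miss the crucial simplification that makes the paper's argument go through cleanly. The paper invokes~\cite{BPS2016}*{Prop.~9.2}: the Selmer group is contained in the subgroup of~$H$ unramified outside~$2$ \emph{and the odd primes where the Tamagawa number of~$J$ is even}. Since Lemma~\ref{L:1} shows the special fibers at~$23$ and~$29$ are irreducible, the Tamagawa numbers there are~$1$, hence odd, so the unramified-outside constraint is actually just ``outside~$\{2\}$'' — not ``outside~$\{2,23,29\}$'' as the conductor would naively suggest. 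Combined with the (unconditionally computed) triviality of the class groups of~$\Q(\alpha_{12})$ and~$\Q(\alpha_{24})$, this means the ambient group~$S_0$ is the product of $\{2\}$-unit groups modulo squares, which is already small.

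After that, the paper imposes the local condition at the \emph{single} place $v = 2$: it computes $\im(\gamma_2) \subset H_2$ (of dimension $\dim J(\Q_2)[2] + 4 = 8$, generated by images of points in known residue disks), then intersects $S_0$ with $\rho_2^{-1}(\im\gamma_2)$ and finds the result coincides with $\gamma(J(\Q)_{\known})$, which has dimension~$3$. Since this already matches the lower bound, no local computations at~$23$, $29$, or~$\infty$ are carried out at all. Your proposal would instead compute $\im(\gamma_v)$ for $v \in \{2,23,29,\infty\}$; this is not wrong, but it would mean working out $J(\Q_{23})/2J(\Q_{23})$ and $J(\Q_{29})/2J(\Q_{29})$ at primes of bad reduction (a nontrivial local analysis the paper entirely sidesteps), only to rediscover the unramified condition that~\cite{BPS2016}*{Prop.~9.2} hands you for free. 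Also, the parenthetical formula you offer for $\dim J(\Q_v)/2J(\Q_v)$ is garbled — the correct statement, used in the paper via~\cite{Schaefer1998}*{Prop.~2.4}, is that this dimension equals $\dim_{\F_2} J(\Q_v)[2]$ for finite $v \nmid 2$ and $\dim_{\F_2} J(\Q_v)[2] + g\,[\Q_v:\Q_2]$ for $v \mid 2$, where $g = 4$. Finally, the mention of ``auxiliary good primes to certify everywhere-local solubility'' is a red herring: at a good prime not dividing~$2$, the local condition is automatically the unramified one, which is already built into~$S_0$.
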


\begin{proof}
  By general theory (see~\cite{BPS2016}*{Prop.~9.2}), we know that $S$ is contained in the
  subgroup of~$H$ consisting of elements that are unramified (i.e., such that
  adjoining a square root gives an unramified extension) outside~$2$ and the
  odd primes~$p$ such that the Tamagawa number of~$J$ at~$p$ is even.
  Since by Lemma~\ref{L:1}, the special fiber of the minimal regular model
  of~$C$ at each of the two odd bad primes has only one component, the Tamagawa
  number is~$1$ for all odd primes. We conclude that $S$ is contained in the
  subgroup of elements unramified outside~$2$.

  A short computation with, e.g., Magma shows that $\Q(\alpha_{12})$ has trivial
  class group. The Minkowski bound for~$\Q(\alpha_{24})$ is sufficiently small
  to allow for an unconditional determination of the class group; Magma takes
  about 5~hours to show that this group is also trivial. Combined with the
  discussion in the preceding paragraph, this tells us that $S$ is contained
  in the product~$S_0$ of the groups of $\{2\}$-units modulo squares of the fields corresponding
  to the various factors, namely $\Q$, $\Q$, $\Q(\alpha_{12})$,
  $\Q(\alpha_{24})$, $\Q(\alpha_{24})$, $\Q(\alpha_{24})$.
  We can then compute this (finite) group~$S_0$, which gives us a first upper
  bound on~$S$. We also find the image of the group generated by differences
  of known rational points in~$S$ under~$\gamma$ and check that it has
  dimension~$3$.

  The next step is to find $\im(\gamma_2) \subset H_2$. Its dimension is at
  most the dimension of~$J(\Q_2)/2J(\Q_2)$, which, by general theory
  (see~\cite{Schaefer1998}*{Prop.~2.4}), is $\dim J(\Q_2)[2] + 4$
  (where $4$ is the genus).
  We can use the Galois action on the theta characteristics to determine
  that $\dim J(\Q_2)[2] = 4$. We then use the known rational points (taking
  one of them as a basepoint) and further points that are $2$-adically close
  to them (obtained by parameterizing the residue disk by power series) to
  produce a number of elements of the image. We then check that these elements
  generate a subspace of dimension~$8$, which implies that this subspace is
  the image of~$\gamma_2$.

  Finally, we set up the reduction map
  $\rho_2 \colon S_0 \hookrightarrow H \to H_2$ and compute
  $\rho_2^{-1}\bigl(\im(\gamma_2)\bigr) \subset S_0$.
  This subgroup of~$S_0$ contains~$S$, and we verify that it coincides
  with the image under~$\gamma$ of the subgroup of~$J(\Q)$ generated
  by the known points. This implies that $S = \gamma(J(\Q))$; in particular,
  this group has dimension~$3$.
\end{proof}

We can then deduce Theorem~\ref{T:main} without relying on the BSD conjecture.

\begin{corollary}
  The Mordell-Weil group $J(\Q)$ has rank~$1$, and $C(\Q)$ consists
  of the nine known rational points.
\end{corollary}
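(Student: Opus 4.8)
The plan is simply to assemble the pieces already in place; no new input is needed. First I would record that, by construction, the $2$-Selmer group sits in a commutative diagram mapping to $S$, and because $w = w_{\Q}$ is injective (Lemma~\ref{L:w_injective}) the induced map $\Sel_2(J/\Q) \To S$ is an embedding. Hence Proposition~\ref{P:Selmer} gives
\[ \dim_{\F_2} \Sel_2(J/\Q) \le \dim_{\F_2} S = 3 \,. \]

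Next I would feed this bound into the descent exact sequence
\[ 0 \To J(\Q)/2J(\Q) \To \Sel_2(J/\Q) \To \Sha(J/\Q)[2] \To 0 \,, \]
which yields $\dim_{\F_2} J(\Q)/2J(\Q) \le 3$. Since $J(\Q)$ is a finitely generated abelian group, $\dim_{\F_2} J(\Q)/2J(\Q) = \rk J(\Q) + \dim_{\F_2} J(\Q)[2]$, and $\dim_{\F_2} J(\Q)[2] = 2$ by Lemma~\ref{L:5}; therefore $\rk J(\Q) \le 1$. On the other hand, Lemma~\ref{L:5} shows that the group generated by differences of the nine known rational points already has rank~$1$, so $\rk J(\Q) \ge 1$. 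Combining the two inequalities, $\rk J(\Q) = 1$.

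Finally, Lemma~\ref{L:4} says precisely that rank~$1$ implies Theorem~\ref{T:main}, so $C(\Q)$ consists of the nine listed points. (One may note as a by-product that the proof of Proposition~\ref{P:Selmer} in fact establishes $S = \gamma(J(\Q))$, which forces every inequality above to be an equality, so $\Sel_2(J/\Q) = J(\Q)/2J(\Q)$ has dimension~$3$ and $\Sha(J/\Q)[2] = 0$; this is not needed for the corollary but will be used in Section~\ref{S:BSD}.) There is no real obstacle at this stage: all of the genuine work has been done in Proposition~\ref{P:Selmer} and Lemmas~\ref{L:4} and~\ref{L:5}, and the only point requiring a moment's care is that the passage from $\Sel_2(J/\Q)$ to $S$ is a genuine embedding, which is exactly why Lemma~\ref{L:w_injective} was proved.
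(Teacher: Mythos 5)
Your proof is correct and follows exactly the route the paper takes: bound $\dim_{\F_2}\Sel_2(J/\Q)$ by $\dim_{\F_2} S = 3$ (via Lemma~\ref{L:w_injective} and Proposition~\ref{P:Selmer}), subtract $\dim_{\F_2} J(\Q)[2]=2$ from Lemma~\ref{L:5} to get $\rk J(\Q)\le 1$, combine with the lower bound from Lemma~\ref{L:5}, and invoke Lemma~\ref{L:4}. The paper merely compresses these steps into a single chain of inequalities, and your closing observation that $S=\gamma(J(\Q))$ forces $\Sha(J/\Q)[2]=0$ is also correct and consistent with the remark at the end of Section~\ref{S:BSD}.
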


\begin{proof}
  By Lemma~\ref{L:5} and Proposition~\ref{P:Selmer}, we have that
  \[ 1 \le \rk J(\Q) \le \dim_{\F_2} \Sel_2(J/\Q) - 2 \le \dim_{\F_2} S - 2 = 1 \,; \]
  this shows the first statement. The second statement then follows
  from Lemma~\ref{L:4}.
\end{proof}


\section{The BSD conjecture for~$J$} \label{S:BSD}

We can use the information obtained from the two approaches we have presented
to provide a partial numerical verification of the strong form of the
Birch and Swinnerton-Dyer conjecture for the Jacobian~$J$ of our curve.
Note that what we have done already proves the weak form, assuming that
the $L$-series of~$J$ has an analytic continuation and satisfies the expected
functional equation (which, assuming the former, it does numerically).
Recall that the conjecture predicts in our case that
\[ L'(J, 1) = \frac{\prod_p c_p \cdot \Omega_{J/\Q} \cdot R_{J/\Q} \cdot \#\Sha(J/\Q)}%
                   {\#J(\Q)_{\tors}^2} \,,
\]
where $p$ runs through the bad primes, $c_p$ is the Tamagawa number of~$J$ at~$p$,
$\Omega_{J/\Q}$ is the ``real period'', i.e., the integral of a top N\'eron
differential form on~$J$ over the real points~$J(\R)$, $R_{J/\Q}$ is the regulator
of~$J(\Q)$, which in our rank~$1$ situation is simply the canonical height
$\hat{h}(P)$ of a generator~$P$ of the free part of~$J(\Q)$, and $\Sha(J/\Q)$
is the Tate-Shafarevich group of~$J$, which is conjectured (but not known)
to be finite. The left hand side of the equation has been computed in the
course of our analytic proof.

We have already seen that $c_{23} = c_{29} = 1$. We can compute the component
group of the N\'eron model of~$J$ at~$2$ from the configuration of the
special fiber given in Lemma~\ref{L:1} and find that the group consists of
elements fixed by Frobenius and has structure $\Z/2\Z \times \Z/6\Z$,
so that $c_2 = 12$. Lemma~\ref{L:5} shows that $\#J(\Q)_{\tors} = 36$.

We determine~$\Omega_{J/\Q}$ as follows. Since the given model is already
regular at all primes, a basis of the N\'eron $1$-forms on~$C$ is given
by the standard differentials
\[ \frac{x^j y^k}{\frac{\partial F}{\partial y}(x,y)}\,dx
    = -\frac{x^j y^k}{\frac{\partial F}{\partial x}(x,y)}\,dy \,, \qquad
   (j,k) \in \{(0,0), (0,1), (1,0), (1,1)\} \,,
\]
where $F(x,y) = 0$ is the affine equation defining the curve. We integrate
this basis of differentials on a basis of the integral homology~$H_1(C(\C), \Z)$
to obtain the ``big period matrix'' of~$J$. Magma has functionality
for computing the big period matrix for the Jacobian of a plane curve.
The real period is the covolume of the real lattice generated
by the traces (twice the real parts) of the columns of the big period matrix.

To compute the regulator, we recall from Lemma~\ref{L:5} (and using
that we now know that $J(\Q)$ has rank~$1$) that we know the full group~$J(\Q)$.
We can compute the regulator by relating the height of a generator~$P$
of the free part of~$J(\Q)$ to the height of its image~$P_E$ on the elliptic
curve~$E$, which is a generator of~$E(\Q)$, as follows.
Let $\phi \colon E \to J$ be the morphism induced via Picard functoriality
by the double cover $C \to E$. Then $\phi(P_E) = 2 P$. The compatibility
of heights with morphisms implies that $m \hat{h}_E(P_E) = \hat{h}_J(\phi(P_E))$,
where $m = \deg \phi^* \Theta_J$ is the intersection number of the image
of~$E$ in~$J$ with the theta divisor of~$J$ (note that both canonical
heights are associated to Weil heights coming from twice the corresponding
theta divisor). This gives
\[ R_{J/\Q} = \hat{h}_J(P)
            = \tfrac{1}{4} \hat{h}_J(2P)
            = \tfrac{1}{4} \hat{h}_J(\phi(P_E))
            = \tfrac{m}{4} \hat{h}_E(P_E)
            = \tfrac{m}{4} R_{E/\Q} \,.
\]
By~\cite{ACGH}*{Page 370, Exercise~VIII.D-10}, we find that $m = 2$ (note that
$n = 2$ is the degree of $C \to E$ and $h = 1$ is the genus of~$E$
in the notation there), so that
$R_{J/\Q} = \tfrac{1}{2} R_{E/\Q}$.

We finally solve for the order of~$\Sha(J/\Q)$. The result is that
\[ \#\Sha(J/\Q) \approx 1 \]
to the precision of our computation.
This is consistent with what we know about $\Sha(J/\Q)$, namely that it
is supposed to be finite, that its order is a square (if finite) because
of the existence of the Cassels-Tate pairing and the fact that $C$ has
rational points (see~\cite{PoonenStoll1999b}) and that its order must be
odd, since $\dim_{\F_2} \Sel_2(J/\Q) = \dim_{\F_2} J(\Q)[2] + \rk J(\Q)$.

\begin{remark*}
  An alternative way to determine~$m$, and indeed the approach we originally
  followed, is to use the method described in~\cite{vBHM} to obtain
  a numerical approximation of~$\hat{h}_J(P)$ by computing the
  height pairing between two divisors that are differences of distinct
  rational points on~$C$ and whose images in~$J(\Q)$ are known multiples
  of~$P$ up to adding torsion points. (We note that the code that was generously
  supplied to us by Steffen M\"uller had to be tweaked
  a bit to work for this example.) This shows that
  $m/4 = \hat{h}_J(P)/\hat{h}_E(P_E)$ is close to~$1/2$, and since
  we know that $m$ is a natural number, this proves that $m = 2$.
\end{remark*}


\begin{bibdiv}
\begin{biblist}

\bib{ACGH}{book}{
   author={Arbarello, E.},
   author={Cornalba, M.},
   author={Griffiths, P. A.},
   author={Harris, J.},
   title={Geometry of algebraic curves. Vol. I},
   series={Grundlehren der mathematischen Wissenschaften [Fundamental
   Principles of Mathematical Sciences]},
   volume={267},
   publisher={Springer-Verlag, New York},
   date={1985},
   pages={xvi+386},
   isbn={0-387-90997-4},
   review={\MR{770932}},
   doi={10.1007/978-1-4757-5323-3},
}

\bib{vBHM}{article}{
   author={van Bommel, Raymond},
   author={Holmes, David},
   author={M\"{u}ller, J. Steffen},
   title={Explicit arithmetic intersection theory and computation of
   N\'{e}ron-Tate heights},
   journal={Math. Comp.},
   volume={89},
   date={2020},
   number={321},
   pages={395--410},
   issn={0025-5718},
   review={\MR{4011549}},
   doi={10.1090/mcom/3441},
}

\bib{Magma}{article}{
   author={Bosma, Wieb},
   author={Cannon, John},
   author={Playoust, Catherine},
   title={The Magma algebra system. I. The user language},
   note={Computational algebra and number theory (London, 1993)},
   journal={J. Symbolic Comput.},
   volume={24},
   date={1997},
   number={3-4},
   pages={235--265},
   issn={0747-7171},
   review={\MR{1484478}},
   doi={10.1006/jsco.1996.0125},
}

\bib{BPS2016}{article}{
   author={Bruin, Nils},
   author={Poonen, Bjorn},
   author={Stoll, Michael},
   title={Generalized explicit descent and its application to curves of genus 3},
   journal={Forum Math. Sigma},
   volume={4},
   date={2016},
   pages={e6, 80},
   issn={2050-5094},
   review={\MR{3482281}},
   doi={10.1017/fms.2016.1},
}

\bib{FHS2011}{article}{
   author={Faber, Xander},
   author={Hutz, Benjamin},
   author={Stoll, Michael},
   title={On the number of rational iterated preimages of the origin
   under quadratic dynamical systems},
   journal={Int. J. Number Theory},
   volume={7},
   date={2011},
   number={7},
   pages={1781--1806},
   issn={1793-0421},
   review={\MR{2854215}},
   doi={10.1142/S1793042111004162},
}

\bib{Faltings}{article}{
   author={Faltings, G.},
   title={Endlichkeitss\"{a}tze f\"{u}r abelsche Variet\"{a}ten \"{u}ber Zahlk\"{o}rpern},
   language={German},
   journal={Invent. Math.},
   volume={73},
   date={1983},
   number={3},
   pages={349--366},
   issn={0020-9910},
   review={\MR{718935}},
   doi={10.1007/BF01388432},
}

\bib{FPS}{article}{
   author={Flynn, E. V.},
   author={Poonen, Bjorn},
   author={Schaefer, Edward F.},
   title={Cycles of quadratic polynomials and rational points on a genus-$2$
   curve},
   journal={Duke Math. J.},
   volume={90},
   date={1997},
   number={3},
   pages={435--463},
   issn={0012-7094},
   review={\MR{1480542}},
   doi={10.1215/S0012-7094-97-09011-6},
}

\bib{Hulsbergen}{book}{
   author={Hulsbergen, Wilfred W. J.},
   title={Conjectures in arithmetic algebraic geometry},
   series={Aspects of Mathematics, E18},
   note={A survey},
   publisher={Friedr. Vieweg \& Sohn, Braunschweig},
   date={1992},
   pages={vi+236},
   isbn={3-528-06433-1},
   review={\MR{1150049}},
   doi={10.1007/978-3-322-85466-7},
}

\bib{Kolyvagin}{article}{
   author={Kolyvagin, V. A.},
   title={Finiteness of $E({\bf Q})$ and $ \Sha(E,{\bf Q})$ for a subclass of Weil curves},
   language={Russian},
   journal={Izv. Akad. Nauk SSSR Ser. Mat.},
   volume={52},
   date={1988},
   number={3},
   pages={522--540, 670--671},
   issn={0373-2436},
   translation={
      journal={Math. USSR-Izv.},
      volume={32},
      date={1989},
      number={3},
      pages={523--541},
      issn={0025-5726},
   },
   review={\MR{954295}},
   doi={10.1070/IM1989v032n03ABEH000779},
}

\bib{LMFDB}{misc}{
   label={LMFDB},
   author={The {LMFDB Collaboration}},
   title={The L-functions and Modular Forms Database},
   note={\url{http://www.lmfdb.org}},
   year={2022},
}

\bib{Morton98}{article}{
   author={Morton, Patrick},
   title={Arithmetic properties of periodic points of quadratic maps. II},
   journal={Acta Arith.},
   volume={87},
   date={1998},
   number={2},
   pages={89--102},
   issn={0065-1036},
   review={\MR{1665198}},
   doi={10.4064/aa-87-2-89-102},
}

\bib{MS}{article}{
   author={Morton, Patrick},
   author={Silverman, Joseph H.},
   title={Rational periodic points of rational functions},
   journal={Internat. Math. Res. Notices},
   date={1994},
   number={2},
   pages={97--110},
   issn={1073-7928},
   review={\MR{1264933}},
   doi={10.1155/S1073792894000127},
}

\bib{Poonen98}{article}{
   author={Poonen, Bjorn},
   title={The classification of rational preperiodic points of quadratic
   polynomials over $\Q$: a refined conjecture},
   journal={Math. Z.},
   volume={228},
   date={1998},
   number={1},
   pages={11--29},
   issn={0025-5874},
   review={\MR{1617987}},
   doi={10.1007/PL00004405},
}

\bib{PoonenStoll1999b}{article}{
   author={Poonen, Bjorn},
   author={Stoll, Michael},
   title={The Cassels-Tate pairing on polarized abelian varieties},
   journal={Ann. of Math. (2)},
   volume={150},
   date={1999},
   number={3},
   pages={1109--1149},
   issn={0003-486X},
   review={\MR{1740984}},
   doi={10.2307/121064},
}

\bib{Schaefer1998}{article}{
   author={Schaefer, Edward F.},
   title={Computing a Selmer group of a Jacobian using functions on the curve},
   journal={Math. Ann.},
   volume={310},
   date={1998},
   number={3},
   pages={447--471},
   issn={0025-5831},
   review={\MR{1612262}},
   doi={10.1007/s002080050156},
}

\bib{Stoll2006a}{article}{
   author={Stoll, Michael},
   title={Independence of rational points on twists of a given curve},
   journal={Compos. Math.},
   volume={142},
   date={2006},
   number={5},
   pages={1201--1214},
   issn={0010-437X},
   review={\MR{2264661}},
   doi={10.1112/S0010437X06002168},
}

\bib{Stoll2008a}{article}{
   author={Stoll, Michael},
   title={Rational 6-cycles under iteration of quadratic polynomials},
   journal={LMS J. Comput. Math.},
   volume={11},
   date={2008},
   pages={367--380},
   issn={1461-1570},
   review={\MR{2465796}},
   doi={10.1112/S1461157000000644},
}

\bib{Code}{misc}{
   author={Stoll, Michael},
   title={Magma code verifying the computational claims in this paper},
   date={2022-03-24},
   note={available at \url{https://www.mathe2.uni-bayreuth.de/stoll/magma/genus4curve2022.magma}},
}

\bib{Tate}{article}{
   author={Tate, John},
   title={On the conjectures of Birch and Swinnerton-Dyer and a geometric analog},
   conference={
      title={S\'{e}minaire Bourbaki, Vol. 9},
   },
   book={
      publisher={Soc. Math. France, Paris},
   },
   date={1995},
   pages={Exp. No. 306, 415--440},
   review={\MR{1610977}},
}

\end{biblist}
\end{bibdiv}

\end{document}